\newcommand{\De}{\mathbb{D}}
\newcommand{\D}{\mathcal{D}}
\newcommand{\C}{\mathbb{C}}
\newcommand{\N}{\mathbb{N}}
\newcommand{\B}{\mathcal{B(H)}}
\newcommand{\R}{\mathcal{R}}
\newcommand{\n}{\mathcal{N}}
\newcommand{\h}{\mathcal{H}}
\newcommand{\m}{\mathcal{M}}
\newcommand{\ka}{\mathcal{K}}
\theoremstyle{proclaim}
\newtheorem{theorem}{Theorem}[section]
\newtheorem{corollary}[theorem]{Corollary}
\newtheorem{proposition}[theorem]{Proposition}
\theoremstyle{statement}
\newtheorem{remark}[theorem]{Remark}
\newtheorem{definition}[theorem]{Definition}
\newtheorem{example}[theorem]{Example}
\theoremstyle{fancyproclaim}
\numberwithin{equation}{section}
\begin{document}% Do not forget this command!
% This is the volume information, described below
\issueinfo{00}{0}{0000} 
% \issueinfo{vv}{n}{yyyy}, vv is the volume, n is the number, yyyy is the year
% Information on the Editor (the editor who accepted the article)
\commby{Hari Bercovici}
% Beginning and ending page; these are fixed properly when finished only
\pagespan{101}{130}
% Date of receipt of the article; leave it like this
\date{Month dd, yyyy}
% If needed, the next macro will be used; otherwise comment it out
% This is date(s) of revision(s) of the manuscript, comma separated
\revision{Month dd, yyyy}
%\title[Short title]{Long title of the article}% The title of the article
\title[Operators with two-isometric dilations]{Hilbert space operators with two-isometric dilations}
% The next command can be used for dedication of the article
% If not needed, comment it out
%\dedicatory{Dedicated to ...}
% The next sequence of commands is for entering the author's information.
% We give the example for three authors.
\author[C. Badea{\protect \and}L. Suciu]{C\u{a}t\u{a}lin Badea{\protect
\and}Laurian Suciu}
% For multiple authors, please always use the {\protect \and} sequence
%\address{C. BADEA, Laboratoire Paul Painlev\'{e}, Univ. Lille, UMR 8524 CNRS, France}
\address{C. BADEA, Univ Lille, CNRS, UMR 8524 - Laboratoire Paul Painlev\'{e}, France}
\email{cbadea@univ-lille.fr}
\address{L. SUCIU, Dept. of Mathematics and Informatics, ``Lucian Blaga'' University of Sibiu, Romania}
\email{laurians2002@yahoo.com}

% In the \address environment, the authors' names should always be in capitals
% The next environment is for the abstract
\begin{abstract} 
A continuous linear Hilbert space operator $S$ is said to be a $2$-isometry if the operator $S$ and its adjoint $S^*$ satisfy the relation $S^{*2}S^{2} - 2 S^{*}S + I = 0$. In this paper, we study
Hilbert space operators having liftings or dilations to $2$-isometries. The adjoint of an operator which admits such liftings is characterized as the restriction of a backward shift on a Hilbert space of vector-valued analytic functions. These results are applied to concave operators (i.e., operators $S$ such that $S^{*2}S^{2} - 2 S^{*}S + I \le 0$) and to operators similar to contractions or isometries. Two types of liftings to $2$-isometries, as well as the extensions induced by them, are constructed and isomorphic minimal liftings are discussed. 
% An abstract of at most 100 words and of about 8 rows should be included.
% The abstract should summerise the results of the paper and nothing more.
% The main purpose of the abstract is to enable the readers to take in the
% nature and results of the papers quickly and to decide whether they are
% willing to read the entire paper or not. Citation of bibliography within the
% abstract should be avoided. Do not use custom macros inside the abstract.
\end{abstract}
% The next environment is for Mathematics Subject Classification codes: 
\begin{subjclass}
47A05, 47A15, 47A20, 47A63
% This is a list of at most 10 keywords; can be written in two acceptable ways:
% either ``code1, code2, ..., codeN.'' 
%  or ``Primary code0; Secondary code1, ...,codeN.''
\end{subjclass}
% The next environment is for keywords; there should be no more than 10
% keywords
\begin{keywords}
dilations, 2-isometric lifting, concave operator, A-contraction, Dirichlet shift
% Here you should include a list of at most 10 keywords comma separated 
% and ended by period.
\end{keywords}
\maketitle
%\section{Introduction and preliminaries}\label{sect:intro}
\section{INTRODUCTION AND PRELIMINARIES}\label{sect:intro}
%\medskip

\subsection{Overview.} Beginning with the Sz.-Nagy dilation theorem, isometric liftings and unitary dilations of Hilbert space contractions (operators of norm no greater than one) have been basic objects of study in Operator Theory. This is witnessed, for instance, by the central role played by isometric liftings and unitary dilations in the celebrated monographs \cite{FF} and \cite{SFb}. For further considerations it is useful to keep in mind that every isometry can be written as a direct sum of a shift and a unitary operator (the Wold decomposition theorem) and that contractions have isometric liftings and unitary dilations.

Starting with a series of three papers \cite{AS1,AS2,AS3} by Agler and Stankus, a rich theory of $2$-isometries has been developed in recent years. A $2$-{\it isometry} is a Hilbert space operator $T$ which satisfies the second order difference condition $\|T^2x\|^2 - 2\|Tx\|^2 + \|x\|^2 = 0$ for every $x$, instead of the classical, first order condition $\|Tx\|^2 - \|x\|^2=0$ satisfied by isometries. The Dirichlet shift is an example of a $2$-isometry which is not an isometry. Oper\-a\-tors arising from a certain class of nonstationary stochastic processes related to Brownian motion (Brownian unitaries) play an essential role in the theory of $2$-isometries of Agler and Stankus, the same that unitary operators play for isometries. The fact that a $2$-isometry has an extension to a Brownian unitary has been proved in \cite[Theorem 5.80]{AS2}. As an analogue of the Wold decomposition theorem for isometries, it has been proved in \cite{Ol} (see also \cite{Richter0})  that a pure $2$-isometry is unitarily equivalent to a shift operator (multiplication by the independent
variable) on a Dirichlet space $D(\mu)$ corresponding to a positive operator measure $\mu$ on the unit circle.

The aim of this paper is to undertake a systematic study of operators possessing $2$-isometric liftings. This class of operators, denoted $\mathscr{C}_2$, can be viewed as the class of ``$2$-contractions''. To give a flavor of the results obtained in this paper we mention now several sample results. In Theorem~\ref{te48}, we give a characterization of adjoints of operators in $\mathscr{C}_2$ as restrictions of a backward shift operator on some Hilbert spaces of vector-valued analytic functions. We also prove that operators in $\mathscr{C}_2$ are compressions to semi-invariant subspaces of \emph{analytic} Brownian unitaries. An analogue of the von Neumann inequality is obtained in Theorem \ref{te48}, (iii), with the shift operator on a Dirichlet space $D(\mu)$ as an extremal operator. For a bounded linear {\it concave operator}, $T$, acting on a complex Hilbert space $\h$, that is one which satisfies the condition
$$\|T^2x\|^2 - 2\|Tx\|^2 + \|x\|^2 \le 0\quad  \textrm{ for every } x\in \h,$$
it is proved in Theorem~\ref{pr210} below that $T$ has a $2$-isometric lifting $S$, acting on a larger Hilbert space $\ka$, such that $S|_{\ka \ominus \h}$
is an isometry on $\ka \ominus  \h$ and $\|S^*S-I\| = \|T^*T-I\|$. Moreover, the $2$-isometric lifting $S$ is minimal in the usual sense.

\subsection{Basic definitions and notation.} Throughout this paper $\mathcal{B}(\h,\h')$ denotes the Banach space of all bounded linear operators acting from a complex Hilbert space $\h$ into another one, $\h'$, and $\B$ is a short for $\mathcal{B}(\h,\h)$. For an operator $T\in \mathcal{B}(\h,\h')$ its \emph{adjoint operator} in $\mathcal{B}(\h',\h)$ is denoted by $T^*$, while $\R(T)$ and $\n(T)$ stand for the \emph{range}, respectively the \emph{kernel} of $T$. An operator $T$ is a \emph{contraction} if $T^*T \le I$, where $I=I_{\h}$ is the identity operator on $\h$. The operator $T$ is an \emph{isometry} if $T^*T=I$ and $T$ is \emph{unitary} if it is an isometry with $\R(T)=\h'$. For a closed subspace $\m$ of $\h$, the \emph{orthogonal projection} in $\B$ with the range $\m$ is denoted by $P_{\m}$. The subspace $\m$ is \emph{invariant} (\emph{reducing}) for $T\in \B$ if $T\m \subset \m$ (respectively, $T\m\subset \m$ and $T^*\m \subset \m$). An isometry $T$ on $\h$ is pure (or a \emph{shift} operator) if there is no subspace $\m \neq \{0\}$ in $\h$ that reduces $T$ to a unitary operator.

An operator $T$ on $\h$ is said to be \emph{positive} (in notation $T\ge 0$) if $\langle Th,h\rangle \ge 0$ for every $h\in \h$. Here $\langle \cdot, \cdot \rangle$ denotes the inner product. When $T$ is positive, $T^{1/2}$ stands for the \emph{positive square root} of $T$. For a contraction $T\in \mathcal{B}(\h,\h')$, the operator $D_T=(I_{\h}-T^*T)^{1/2}\in \B$ and its closed range $\D_T=\overline{\R(D_T)}$ are called the \emph{defect operator}, respectively the \emph{defect space}, of $T$.

An operator $Z \in \mathcal{B}(\h,\h')$ is \emph{invertible} whenever $\n(Z)=\{0\}$ and $\R(Z)=\h'$. An operator $Z \in \mathcal{B}(\h,\h')$ \emph{intertwines} $T\in \B$ with $T' \in \mathcal{B}(\h')$ if $ZT=T'Z$. The operators $T$ and $T'$ are \emph{similar}, respectively \emph{unitarily equivalent}, whenever an intertwining relation $ZT=T'Z$ holds with an invertible, respectively unitary operator $Z$.

Whenever $ZT=T'Z$ with $Z$ an isometry, one says that $T'$ is an {\it extension} of $T$ (to $\h'$), or that $T$ is a {\it restriction} of $T'$ (on $\h$). If this holds one has $T^*Z^*=Z^*T'^*$ and one says that $T'^*$ is a {\it lifting} of $T^*$. In these cases $T$ is unitarily equivalent to $T_0':=T'|_{\R(Z)}$, while $T^*$ is unitarily equivalent to $T'^*_0$. Thus, if we identify $\h$ with $Z\h$ into $\h'$, then $T'$ becomes an extension of $T$ from the subspace $\h$ of $\h'$ to $\h'$, while $Z$ is the embedding mapping $J_{\h,\h'}$ of $\h$ into $\h'$. In this way we can consider only extensions and liftings of operators on $\h$ to another Hilbert spaces containing $\h$. Then the intertwining relations become $J_{\h,\h'}T=T'J_{\h,\h'}$, respectively $P_{\h',\h}T'^*=T^*P_{\h',\h}$, where $P_{\h',\h}=J_{\h,\h'}^* \in \mathcal{B}(\h',\h)$ is the projection of $\h'$ onto $\h$, while the inclusion $\h \subset \h'$ means that $\h$ is a closed subspace of $\h'$.

According to  \cite{FF}, two liftings $S_1$ on $\ka_1 \supset \h$ and $S_2$ on $\ka_2\supset \h$ of an operator $T$ on $\h$ are called {\it isomorphic} if there exists a unitary operator $Z\in \mathcal{B}(\ka_1,\ka_2)$ such that $ZS_1=S_2Z$ and $Z|_{\h}=I$, i.e. $S_1$ and $S_2$ are unitarily equivalent by $Z$ and $Z$ fixes the elements of $\h$. 

For a lifting $S$ on $\ka$ of $T$ one considers the subspace $\ka_0=\bigvee_{n\ge 0} S^n\h$, i.e. the smallest invariant subspace for $S$ in $\ka$ that contains $\h$. Let $S_0=S|_{\ka_0}$ be the restriction of $S$ to $\ka_0$. Clearly $S_0$ is also a lifting of $T$ on $\ka_0$, and it is called a {\it minimal lifting} of $T$. 

	Let $\h$ be a closed subspace of a Hilbert space $\ka$. The \emph{compression} of $R\in \mathcal{B}(\ka)$ to $\h$ is defined as $T=P_{\ka,\h}RJ_{\h,\ka}( = P_{\h}R|_{ \h})$. Recall the following useful result due to Sarason (see \cite{SFb}). The operator $R$ is a \emph{(power) dilation} of $T$, that is, $T^n = P_{\h}R^n\mid \h$ for all positive integers $n$, if and only if the subspace $\h$ is \emph{semi-invariant} for $R$, that is $\h = \h_1 \ominus \h_2$ for two invariant subspaces $\h_1$ and $\h_2$ of $R$.

According to \cite{Sh}, if $T\in \B$ is a fixed \emph{left invertible} operator (i.e. injective and with closed range), then $T'=T(T^*T)^{-1}$ is called the {\it Cauchy dual operator} of $T$. Clearly, $T'$ is also left invertible and $T^*T'=T'{^*}T=I$. So $T$ is the Cauchy dual operator of $T'$ and $\n(T^*)=\n(T'^{*})=:\mathcal{E}$. Obviously, $\mathcal{E}$ is a \emph{wandering subspace} for $T$, that is $\mathcal{E} \perp T^n\mathcal{E}$ for each $n\ge 1$.

\subsection{Two-isometries} In this paper we are interested in $2$-isometric liftings for operators in $\B$.
We refer to \cite{AS1, AS2, AS3, Aleman, ACJS, AR, BS2, GO, McC, Ol, Richter, Richter0, Rydhe, MMS, MajSu, Sh, Stankus, S-2020} for different aspects of $2$-isometries.

Recall that an operator $T$ on $\h$ is a 2-{\it isometry} if it satisfies the condition $T^*\Delta_TT=\Delta_T$, where $\Delta_T=T^*T-I$. In this case $\Delta_T$ is called the {\it operator of covariance} and ${\rm cov}(T)=\|\Delta_T\|^{1/2}$ is the {\it covariance} of $T$. Clearly, ${\rm cov}(T)=0$ if and only if $T$ is an isometry. If $T$ is a 2-isometry, then $\Delta_T\ge 0$, which means that $T$ is an {\it expansive} operator. A $2$-isometry $T$ that is \emph{power bounded}, i.e. $\sup_{n\in \N}\|T^n\|<\infty$, is necesarily an isometry. Also, $\n(\Delta_T)$ is an invariant subspace for a 2-isometry $T$ and $V=T|_{\n(\Delta_T)}$ is an isometry, so the canonical matrix representation of $T$ on $\h=\n(\Delta_T) \oplus \overline{\R(\Delta_T)}$ has the form
\begin{equation}\label{eq12}
T=
\begin{pmatrix}
V & E\\
0 & Y
\end{pmatrix},
\end{equation}
with $V^*E=0$. This yields $\Delta_T=0\oplus (E^*E+\Delta_Y)$, with $\Delta_0=E^*E+\Delta_Y=\Delta_T|_{\overline{\R(\Delta_T)}}$ an injective operator. Moreover, using that $T^*\Delta_TT=\Delta_T$, one obtains $Y^*\Delta_0Y=\Delta_0$.

Some special 2-isometries introduced by Agler and Stankus, called Brownian isometries and unitaries, play a special role in our investigations. Namely, $T$ is called a {\it Brownian isometry} if in the block matrix \eqref{eq12} one has $E=\delta E_0$, where $E_0$ is an injective contraction from $\overline{\R(\Delta_T)}$ into $\n(V^*)$, $\delta={\rm cov}(T)$, and $Y$ is a unitary operator which commutes with $E^*E$ (cf. \cite[Proposition 5.37]{AS2}). Also, $T$ is called a {\it Brownian unitary} if $E_0$ is an isometry with $\R(E_0)=\n(V^*)$ and $Y$ is unitary (cf. \cite[Proposition 5.12]{AS2}).

Using the above relationships between $T$ and $\Delta_T$, respectively $Y$ and $\Delta_0$, we see that 2-isometries are closely related to $A$-contractions, defined as follows.
When $T,A \in \B$ with $A\ge 0$, the operator $T$ is said to be an $A$-{\it contraction} if $T^*AT\le A$ and $T$ is an $A$-{\it isometry} if $T^*AT=A$. Thus, $2$-isometries are $\Delta_T$-isometries and the concave operators are $\Delta_T$-contractions, but many other classes of operators can be viewed as $A$-contractions for suitable operators $A$. $A$-contractions and $A$-isometries frequently appear in operatorial interpolation and robust control problems, based on the commutant lifting theory (see \cite{BFF,FF,FFK2}), or in other topics of operator theory \cite{BS1, BS2,BFF,CS,Do,JKKL,Ke,Kub-1997,MMS,MajSu,McC,McCR,MS,BP,Rydhe,Stankus,SS,S-2020,SuSu,S-2009,TV}.

\subsection{Organization of the paper} Following \cite{Sh}, we give in the next section a characterization of the adjoints of operators in the class $\mathscr{C}_2$, as restrictions of a backward shift on some spaces of vector-valued analytic functions. This implies that operators in $\mathscr{C}_2$ always have analytic $2$-isometric liftings. An analogue of the von Neumann inequality for operators in the class $\mathscr{C}_2$ is also obtained. Furthermore, in the case of analytic Brownian unitaries, we refine the Richter-Olofsson model  by taking into account additional  spectral information.

In Section \ref{sect:type0} we describe those operators having $2$-isometric liftings among the operators in the class of $A$-isometries $T$ with $A\ge \Delta_T$. Special features of such liftings, called of type I, are investigated. We apply our results to a class of expansive operators containing concave operators, and to operators similar to isometries. In particular, for these expansive operators we give a generalization of the extension theorem of Agler and Stankus \cite[Theorem 5.80]{AS2}. The use of the Treil-Volberg generalization of the commutant lifting theorem is to be mentioned here.

In Section \ref{sect:type1} we study operators $T\in \mathscr{C}_2$ in the context of $A$-contractions with $A\ge \Delta_T$. Such liftings, called of type II, are more general than those from Section \ref{sect:type0}. We show that these liftings, like those from Section \ref{sect:type0}, can be always chosen to be minimal. The results can be applied to operators similar to contractions. We also discuss some conditions for two minimal 2-isometric liftings to be isomorphic. Two examples are given in order to show that some operators similar to contractions may or may not have 2-isometric liftings of type I.

\subsection{An open problem} We end this Introduction by mentioning the following problem. 
%As above we consider in the sequel the 2-isometric liftings of operators without imposing a constraint on the covariance of the liftings. However, in the next sections we get some special 2-isometric liftings for which the covariance can be determined. Obviously, one can also 
Let $M>0$ be a real number. Consider the class $\mathscr{C}_2(M)$ of Hilbert space operators which have $2$-isometric liftings of covariance less than or equal to $M$. Then $\mathscr{C}_2(M)$ is a \emph{family} in the sense of Agler's abstract approach to model theory (see \cite{Ag}). It is an interesting problem to find the boundary and the extremal elements of the family $\mathscr{C}_2(M)$ (see \cite{Ag} for the undefined terms). We hope to return to this problem in the future.

%\section{Characterizations of $\mathscr{C}_2$ and analytic 2-isometric liftings}\label{sect:analytic}
\section{CHARACTERIZATIONS OF $\mathscr{C}_2$ AND ANALYTIC $2$-ISOMETRIC LIFTINGS}\label{sect:analytic}
Assume that the left invertible operator $S\in \B$ is analytic, that is
$$\h_{\infty} = \bigcap_{n\ge 1} S^n\h =\{0\}.$$
Let $S'=S(S^*S)^{-1}$ be the Cauchy dual operator of $S$. Then, as in \cite{Sh}, one can associate to $\h$ a Hilbert space of $\mathcal{E}$-valued analytic functions $\Theta h$, for $h\in \h$, where $\mathcal{E}=\n(S^*)$ and
$$
(\Theta h)(z)=\sum_{n\ge0} (P_{\mathcal{E}}S'^{*n}h)z^n, \qquad |z|<r(S')^{-1},
$$
$r(S')$ being the spectral radius of $S'$. Consider $\mathcal{D}:=\{\Theta h: h\in \h\}$ as a Hilbert space with the norm induced by $\h$. In this case, the operator $\Theta$ from $\h$ onto $\D$ is an isometry such that $\Theta S=M\Theta$, where $M$ is the forward shift on $\D$, i.e. $(Md)(z)=zd(z)$ for $d\in \D$. Also, one has $\Theta S'^{*}=B\Theta$, where $B$ is the backward shift on $\D$, i.e. $(Bd)(z)=\frac{1}{z}(d(z)-d(0))$, $d\in \D$.

It was proved in \cite[Theorem 3.6]{Sh} that a concave operator $S$, not necessary analytic, admits a Wold-type decomposition, in the sense that
$$
\h=\h_{\infty} \oplus \bigvee_{n\ge0} S^n\mathcal{E}.
$$
In this case the operators $S$ and $S'$ are simultaneously analytic. When this happens, one can associate to $\h$ another Hilbert space $\D'$ of $\mathcal{E}$-valued analytic functions $\Theta 'h$, where
$$
(\Theta 'h)(z)=\sum_{n\ge0} (P_{\mathcal{E}}S^{*n}h)z^n, \qquad h\in \h, \quad |z|<1.
$$
The operator $\Theta '$ isometrically maps $\h$ onto $\D'$ such that $\Theta 'S'=M'\Theta '$ and $\Theta 'S^*=B'\Theta '$, where $M'(B')$ is the forward (backward) shift on $\D'$. The space $\D '$ is the dual of $\D$ with respect to the Cauchy pairing (see also \cite{Sh}). In this way $S^*$ can be identified (by $\Theta$) with the adjoint of the forward shift $M$ on $\D$, or with the backward shift $B'$ on $\D'$ (by $\Theta '$). It is also known that the space $\D'$ is a Bergman space if and only if $\Delta_S=I-S'S'^*$ (see \cite{Ol} and \cite[Theorem 3.1]{GO}).

In addition, if $S$ is a 2-isometry, then, by \cite{Ol}, the associated space $\D$ becomes a Dirichlet type space $\D(\mu)$ of $\mathcal{E}$-valued analytic functions on the open unit disk $\De$ obtained with respect to some positive $\mathcal{B}(\mathcal{E})$-valued operator measure $\mu$ on the unit circle $\mathbb{T}$. We refer to \cite[(3.1)]{Ol} for a description of the norm in $\D(\mu)$ induced by $\mu$.

In general, if $M$ is the forward shift on a Dirichlet type space $\D(\mu)$, then $M$ is an analytic 2-isometry (see \cite[Theorem 3.1]{Ol}), while the operator $M'$ can be seen as the forward shift on the associated Hilbert space $\D'$ of $\n(M^*)$-valued analytic functions on $\De$, $D'$ being the dual of $\D(\mu)$ with respect to the Cauchy pairing. As above, the backward shift $B'$ in $\D'$ is unitarily equivalent to $M^*$ in $\D(\mu)$.

The following result characterizes membership into the class $\mathscr{C}_2$.

\begin{theorem}\label{te48}
For $T\in \B$ the following statements are equivalent:
\begin{itemize}
\item[(i)] $T$ is an element of the class $\mathscr{C}_2$, that is $T$ has a 2-isometric lifting;
\item[(ii)] $T$ has an analytic 2-isometric lifting;
\item[(iii)] there is an operator-valued positive measure $\mu$ on $\mathbb{T}$ such that the inequality
$$\left\|\left[ p_{ij}(T)\right]\right\|_{\mathcal{M}_n(\B)} \le \left\|\left[ p_{ij}(M_z)\right]\right\|_{\mathcal{M}_n(\mathcal{B}(\D(\mu)))} $$
holds true for all finite matrices of polynomials $\left[ p_{ij}\right]$, where $M_z$ is the multiplication by the variable $z$ on $\D(\mu)$ and $\mathcal{M}_n(\B)$, the set of $n\times n$ matrices with entries in $\B$,  is identified with the set of bounded linear operators acting on $\h^{(n)}$, the $\ell_2$-sum of $n$ copies of $\h$.
\item[(iv)] $T$ has a Brownian unitary (power) dilation;
\item[(v)] $T$ has an analytic Brownian unitary dilation;
\item[(vi)] $T^*$ is unitarily equivalent to the restriction to an invariant subspace of the backward shift $B'$ on a Hilbert space $\D'$ of vector-valued analytic functions on $\De$, $\D'$ being in Cauchy pairing to a Dirichlet type space $\D(\mu)$.
\end{itemize}

Moreover, if these conditions hold, then the lifting in (ii) for $T$ can be chosen minimal. Also, the liftings and Brownian unitary dilations in (i), (ii), (iv) and (v) can be chosen to have the same covariance.
\end{theorem}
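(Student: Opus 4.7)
The plan is to establish the six equivalences via two parallel chains that are then linked. The trivial implications (ii)$\,\Rightarrow\,$(i) and (v)$\,\Rightarrow\,$(iv) require no argument. I would prove (i)$\,\Leftrightarrow\,$(iv)$\,\Leftrightarrow\,$(v) via the Agler--Stankus Brownian unitary extension theorem, prove (ii)$\,\Leftrightarrow\,$(vi) via the Shimorin--Olofsson functional model of analytic $2$-isometries, and close the loop with (ii)$\,\Rightarrow\,$(iii) (compression) and (iii)$\,\Rightarrow\,$(i) (abstract dilation \`a la Arveson). Minimality and the common-covariance assertion are tracked throughout.

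For the dilation chain, start from a $2$-isometric lifting $S$ of $T$ on $\ka\supset\h$ and apply \cite[Theorem~5.80]{AS2} to extend $S$ to a Brownian unitary $B$ on some $\tilde\ka\supset\ka$, with ${\rm cov}(B)={\rm cov}(S)$. Since $\ka$ is $B$-invariant and $\ka\ominus\h$ is $S$-invariant inside $\ka$, the subspaces $\h_1:=\ka$ and $\h_2:=\ka\ominus\h$ are both $B$-invariant with $\h=\h_1\ominus\h_2$, so Sarason's lemma yields $T^n=P_\h B^n|_\h$ and establishes (iv). Refining $B$ to its analytic summand--which is reducing in the Brownian-unitary block decomposition and, starting from a minimal lifting, does not disturb the compression to $\h$--upgrades the conclusion to (v). Conversely, for (iv)$\,\Rightarrow\,$(i) write $\h=\h_1\ominus\h_2$ for a Brownian unitary dilation $R$ and set $S:=R|_{\h_1}$; then $\h$ is invariant for $S^*$ and a short computation gives $S^*|_\h=T^*$, so $S$ is a $2$-isometric lifting of $T$ of equal covariance.

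For the analytic-model chain, Olofsson's theorem identifies any analytic $2$-isometry unitarily with the forward shift $M_z$ on some Dirichlet-type space $\D(\mu)$, with adjoint unitarily equivalent to the backward shift $B'$ on the Cauchy-dual $\D'$ (all recalled in the passage just before the theorem). Given an analytic $2$-isometric lifting $S$, the $S^*$-invariant subspace $\h$ transforms, under $S^*\cong B'$, into an invariant subspace of $B'$ realizing $T^*$ as a restriction, giving (ii)$\,\Rightarrow\,$(vi); the converse is immediate by taking $S=M_z$ on the ambient $\D(\mu)$. The implication (ii)$\,\Rightarrow\,$(iii) follows from the lifting identity $p(T)^*=p(S)^*|_\h$ and its matricial version, together with $S\cong M_z$. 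For (iii)$\,\Rightarrow\,$(i), the matrix-norm inequality is an Arveson-style hypothesis: it supplies a unital completely positive map from the operator system generated by $M_z$ into $\B$ sending $M_z$ to $T$, and a Stinespring/Sz.-Nagy-type construction then produces the required $2$-isometric lifting realized inside an ampliation of $\D(\mu)$.

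I expect the principal obstacles to be two. First, in (iii)$\,\Rightarrow\,$(i), one must verify that the abstract Stinespring dilation can be realized concretely as a $2$-isometry (not merely a power dilation lacking algebraic structure) on an ampliated Dirichlet space, with controlled covariance matched against the measure $\mu$. Second, one needs bookkeeping to arrange that all liftings/dilations in (i), (ii), (iv), (v) can be taken simultaneously minimal and of common covariance: minimality is obtained by replacing $S$ with $S|_{\bigvee_{n\ge 0}S^n\h}$, but verifying that this passage preserves analyticity and does not inflate covariance requires invoking the Wold-type decomposition for $2$-isometries due to Richter and Shimorin.
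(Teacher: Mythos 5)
The decisive step in your dilation chain, (iv) $\Rightarrow$ (v), has a genuine gap: you cannot pass to the analytic summand of the Brownian unitary dilation. If $S$ on $\ka$ is the Brownian unitary and $\ka=\ka_\infty\oplus\ka_1$ with $\ka_\infty=\bigcap_{n\ge1}S^n\ka$, there is no reason for $\h$ to sit inside the analytic part $\ka_1$, and minimality of the initial $2$-isometric lifting does not force this. For instance, if $T$ is unitary (say $T=I_{\C}$), its minimal $2$-isometric lifting is $T$ itself and its Brownian unitary extension is again $T$, whose analytic summand is $\{0\}$; compressing the analytic summand to $\h$ yields $0$, not $T$. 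The paper's proof goes in the opposite direction: instead of cutting, it enlarges, lifting the unitary part $U_0=S|_{\ka_\infty}$ to the analytic Brownian unitary $\begin{pmatrix} S_+ & \delta JE\\ 0 & U_0\end{pmatrix}$ on $\ell_+^2(\R(E))\oplus\ka_\infty$ (with $E$ an isometry, $\delta={\rm cov}(S)$, $J$ the canonical injection), and then takes the direct sum with the analytic summand $S_1$. The resulting operator is an analytic Brownian unitary, it is a lifting of $S$, hence still a power dilation of $T$, and it has the same covariance. (A unitary $T$ does admit analytic $2$-isometric liftings--e.g. inside the shift on a local Dirichlet space--but never by discarding the unitary part; the nonzero coupling $\delta JE$ is what makes the enlarged operator analytic, so the degenerate case $\delta=0$, i.e. $T$ a contraction, needs a separate word in any write-up.)

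A secondary, logical point: as written you prove (ii)$\Rightarrow$(i), (i)$\Rightarrow$(iv)$\Rightarrow$(v)$\Rightarrow$(iv)$\Rightarrow$(i), (ii)$\Leftrightarrow$(vi) and (ii)$\Rightarrow$(iii)$\Rightarrow$(i), but no implication carries the group $\{$(i),(iv),(v)$\}$ back into $\{$(ii),(iii),(vi)$\}$, so the six statements are not yet one equivalence class. The paper closes the loop with (v)$\Rightarrow$(ii): restrict the analytic Brownian unitary dilation to the invariant subspace $\h_1$ (where $\h=\h_1\ominus\h_2$); the restriction is a $2$-isometric lifting of $T$ and analyticity passes to invariant subspaces -- a fact you already invoke in your minimality discussion, so this is easily repaired but must be said. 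Also, in your (iv)$\Rightarrow$(i) the restricted lifting has covariance at most, not necessarily equal to, that of the dilation, so the equal-covariance clause requires the paper's extra bookkeeping (pass to the minimal analytic lifting of covariance $\delta_0$ and rebuild the analytic Brownian unitary dilation with that covariance). The remaining parts of your plan -- the Olofsson model and Cauchy pairing for (ii)$\Leftrightarrow$(vi), compression for (ii)$\Rightarrow$(iii), and the Arveson--Stinespring argument for (iii)$\Rightarrow$(i), where the $2$-isometry identity for the dilation is immediate because it is the image of $M_z$ under a unital $*$-representation (this is exactly what the cited result of Badea packages) -- agree with the paper's route.
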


\begin{proof}
The implication (i) $\Rightarrow$ (iv) follows from \cite[Theorem 5.80]{AS2}, using the fact that an extension of a lifting for $T$ gives a (power) dilation for $T$.

Let us now assume (iv), that is $T$ has a Brownian unitary dilation $S$ on $\ka \supset \h$. Because $S$ is not necessarily analytic we can write $S$ as a direct sum $S=U_0 \oplus S_1$ on $\ka=\ka_{\infty} \oplus  \ka_1$, where $\ka_{\infty}=\bigcap_{n\ge 1} S^n\ka$ and $U_0=S|_{\ka_{\infty}}$ is unitary, while $S_1=S|_{\ka_1}$ is an analytic Brownian unitary. But $U_0$ can be lifted to a Brownian unitary $S_0$ on a Hilbert space 
$$\ka_0:=\ell_+^2(\R(E))\oplus \ka_{\infty},$$ where $E\in \mathcal{B}(\ka_{\infty})$ is an isometry. Then $S_0$ has the following block matrix form
$$
S_0=
\begin{pmatrix}
S_+ & \widetilde{E}\\
0 & U_0
\end{pmatrix},
$$
with $S_+$ the forward shift on $\ka'_0=\ell_+^2 (\R(E))$ and $\widetilde{E}=\delta JE$, where $\delta ={\rm cov}(S)$ and $J$ is the embedding of $\R(E)$ into $\ka_0'$. Clearly, $S_0$ has not any isometric summand ($S_+$ being a shift with $R(E)=\n(S_+^*)$), hence $S_0$ is analytic.

Consider the operator $\widetilde{S}=S_0\oplus S_1$ on $\widetilde{\ka}:=\ka_0 \oplus \ka_1$. To see that $\widetilde{S}$ is a Brownian unitary operator we use the matrix representation of $S_1$ on $\ka_1=\n(\Delta_{S_1}) \oplus \R(\Delta_{S_1})=: \ka_2\oplus \ka_3$ (similar to that of $S_0$ from above) given by a forward shift $S_+'$ on $\ka_2$, a unitary operator $U_1$ on $\ka_3$ and an operator $\widetilde{E}_1=\delta E_1$, with $\delta$ as above and $E_1$ an isometry from $\ka_3$ onto $\n(S_+'^{*})$. Thus we can represent $\widetilde{S}$ on $\widetilde{\ka}=\ka_0' \oplus \ka_2 \oplus \ka_{\infty} \oplus \ka_3=(\ka_0' \oplus \ka_2) \oplus (\ka_{\infty} \oplus \ka_3)$, respectively, in the form
\begin{equation}\label{eq:page7}
\widetilde{S}=
\begin{pmatrix}
S_+ & 0 & \widetilde{E} & 0\\
0 & S_+' & 0 & \widetilde{E}_1\\
0 & 0 & U_0 & 0\\
0 & 0 & 0 & U_1
\end{pmatrix}
=
\begin{pmatrix}
S' & \widetilde{G}\\
0 & U
\end{pmatrix}.
\end{equation}
Here $S'= S_+ \oplus S_+'$ is a forward shift, $U=U_0 \oplus U_1$ is a unitary and $\widetilde{G}=\delta G$ with $G$ an isometry from $\ka_{\infty} \oplus \ka_3$ onto $\n(S'^*)$ in $\ka_0'\oplus \ka_2$, $\delta$ being as above. Hence $\widetilde{S}$ is an analytic Brownian unitary with ${\rm cov}(\widetilde{S})=\delta= {\rm cov}(S)$. Since $S$ is a power dilation for $T$ we infer from the representation (\ref{eq:page7}) that $\widetilde{S}$ is also a power dilation for $T$. This concludes the proof that (iv) implies (v).

It is a known fact that from a power dilation $\widetilde{S}$ for $T$ one can obtain a lifting $S$ for $T$, as a restriction of $\widetilde{S}$ to an invariant subspace (see \cite{FF}). Furthermore, if $\widetilde{S}$ is Brownian unitary, then $S$ is a 2-isometry, and $S$ is analytic if $\widetilde{S}$ is, because the analyticity is preserved on invariant subspaces. Hence (v) implies (ii).

Next, we assume (ii). Let $S$ be an analytic 2-isometric lifting on $\ka \supset \h$ for $T$. Then, by \cite[Theorem 4.1]{Ol}, and using the same notation as in the Introduction, we obtain a $\mathcal{B}(\mathcal{E})$-valued positive measure $\mu$ on the unit circle such that $S$ is unitarily equivalent to the forward shift $M = M_z$ on a Dirichlet space $D(\mu)$ of $\mathcal{E}$-valued analytic functions on $\De$, where $\mathcal{E}=\n(S^*)$. It follows that $M$ is an extremal operator in the sense of (iii). Conversely, if (iii) holds, then, using the terminology of \cite{B1}, $T$ is completely polynomially dominated by $M$. Using \cite[Theorem 2.1.3]{B1}, we get that $T$ is unitarily equivalent to the compression of an operator $R$ to a semi-invariant subspace, $R$ being the image of $M$ by a unital C$^*$-representation. It follows that $R$ is a $2$-isometry, and thus $T$ is in the class $\mathscr{C}_2$, that is $T$ satisfies (i). So the equivalences (i)-(v) are proved.

Next, we assume (ii). Then, again, $S$ is an analytic 2-isometric lifting on $\ka \supset \h$ for $T$, unitarily equivalent to the forward shift $M$ on a Dirichlet space $D(\mu)$ of $\mathcal{E}$-valued analytic functions on $\De$. Also, $S^*$ is unitarily equivalent to the backward shift $B'$ on the Hilbert space $\D'=\{\Theta 'k :k\in \ka\}$ of $\mathcal{E}$-valued analytic functions of the form
$$
(\Theta 'k)(z)=\sum_{n\ge 1} (P_{\mathcal{E}}S^{*n}k)z^n, \qquad |z|<1.
$$
Since $S^*|_{\h}=T^*$, it follows that $B'\Theta 'h=\Theta 'T^*h$ for $h \in \h$. Hence, the closed subspace $\D'_0=\{\Theta 'h:h\in \h\}$ is invariant for $B'$. Because the operator $\Theta '$ from $\ka$ onto $\D'$ is an isometry and $\Theta 'S^*=B'\Theta '$, we infer that $\Theta '_0=\Theta '|_{\h}$ is an isometry from $\h$ onto $\D '_0$ and $\Theta '_0T^*=(B'|_{\D'_0})\Theta '_0$. Hence $T^*$ is unitarily equivalent to $B'|_{\D'_0}$, and we conclude that (ii) implies (vi).

Conversely, let us assume that $T^*$ is unitarily equivalent to $B'_0=B'|_{\D'_0}$, where $B'$ is as in (vi) and $\D'_0$ is an invariant subspace of $B'$. The Cauchy pairing between the space $\D'$, where $B'$ acts, and the Dirichlet type space $\D(\mu)$ ensures that $B'$ is unitarily equivalent to $M^*$, the adjoint of the forward shift on $\D(\mu)$. Since $M$ is a 2-isometry on $\D(\mu)$ by \cite[Theorem 3.1]{Ol}, it induces a 2-isometry $B'^{*}$ on $\D'$ which is a lifting for $B'^*_0$ ($B'$ being an extension of $B'_0$). In the same time, $T$ is unitarily equivalent to $B'^*_0$ by our assumption. We deduce that $T$ has $B'^{*}$ as a analytic 2-isometric lifting on $\D'$. We have thus shown that (vi) implies (ii). In conclusion, all equivalences (i)-(vi) are now proved.

Remark that, by \cite[Theorem 5.80]{AS2}, a 2-isometry $S$ has a Brownian unitary extension $\widetilde{S}$ which preserves the covariance of $S$, and, as we already have seen, ${\rm cov}(S)$ can be also preserved for an analytic Brownian unitary extension of $S$. On the other hand, if $S$ is a 2-isometric lifting for $T$, then, as in the proof of implication (iv) $\Rightarrow$ (v), $S$ can be lifted to an analytic 2-isometry of the same covariance as $S$. Thus in all assertions (i), (ii), (iv) and (v) we can obtain 2-isometric liftings and dilations for $T$ of the same covariance.

Finally, if $S$ is an analytic 2-isometric lifting for $T$ on $\ka \supset \h$ (as in (ii)), then $\ka_0= \bigvee_{n\ge 0} S^n\h$ is an invariant subspace for $S$ and $S|_{\ka_0}$ is a minimal analytic 2-isometric lifting for $T$ with $\delta_0= {\rm cov}(S|_{\ka_0}) \le {\rm cov}(S)$. However, by the above discussion, we can get an analytic Brownian unitary dilation $\widetilde{S}$ for $T$ with ${\rm cov}(\widetilde{S})=\delta_0$. This ends the proof.
\end{proof}

\begin{corollary}\label{co49}
If $T\in \B$ is an operator similar to a contraction, then $T$ and $T^*$ are restrictions to invariant subspaces of backward shifts on Hilbert spaces of vector-valued analytic functions on $\De$.
\end{corollary}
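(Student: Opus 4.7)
The plan is to apply Theorem~\ref{te48}\,(vi) to the contractions arising in the similarity relation, and then transfer the resulting realizations to $T$ and $T^*$ by an equivalent renorming of the ambient function spaces.

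First, I would observe that the class of operators similar to contractions is closed under taking adjoints: if $Z \in \B$ is invertible with $C := Z T Z^{-1}$ a contraction, then $C^* = (Z^*)^{-1} T^* Z^*$ is also a contraction, so $T^*$ is similar to $C^*$. Next, every contraction belongs to $\mathscr{C}_2$, since its minimal Sz.-Nagy isometric lifting is, a fortiori, a $2$-isometric lifting. Hence $C, C^* \in \mathscr{C}_2$. Applying Theorem~\ref{te48}\,(vi) to $C$ produces a Hilbert space $\D'$ of vector-valued analytic functions on $\De$, a backward shift $B'$ on $\D'$, and a $B'$-invariant subspace $\D'_0 \subset \D'$ such that $C^*$ is unitarily equivalent to $B'|_{\D'_0}$. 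Applied to $C^*$ instead, the theorem furnishes an analogous realization $C \cong B''|_{\D''_0}$ on some other Hilbert space $\D''$ of vector-valued analytic functions on $\De$.

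Third, I would transfer these realizations by renorming. Composing the unitary equivalence $C^* \cong B'|_{\D'_0}$ with the similarity $T^* = Z^* C^* (Z^*)^{-1}$ yields a bounded invertible operator $W \colon \h \to \D'_0$ with $W T^* W^{-1} = B'|_{\D'_0}$. Endow the vector space $\D'_0$ with the inner product
\[
\langle f, g \rangle_{\mathrm{new}} := \langle W^{-1} f, W^{-1} g\rangle_{\h}.
\]
Since $W^{-1}$ is bounded with bounded inverse, the new norm is equivalent to the original one, so the resulting space $\widetilde{\D}'_0$ still consists of vector-valued analytic functions on $\De$ with continuous point evaluations and is therefore a Hilbert space of vector-valued analytic functions on $\De$. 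On $\widetilde{\D}'_0$ the backward-shift formula now defines an operator unitarily equivalent to $T^*$, exhibiting $T^*$ as the restriction of a backward shift on a Hilbert space of vector-valued analytic functions to an invariant subspace (namely, the whole space). A symmetric argument using $T = Z^{-1} C Z$ and the realization $C \cong B''|_{\D''_0}$ gives the corresponding statement for $T$.

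The main obstacle is verifying that the renormed spaces $\widetilde{\D}'_0$ and its counterpart for $T$ still qualify as Hilbert spaces of vector-valued analytic functions on $\De$; this is immediate because $W$ has bounded inverse, so the equivalent norms preserve both the underlying vector space of analytic functions and the continuity of point evaluations. Note that this approach avoids having to prove the potentially delicate assertion that $T$ itself lies in $\mathscr{C}_2$, since the similarity is exploited at the function-space level rather than at the lifting level.
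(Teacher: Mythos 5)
Your proof is correct, but it follows a genuinely different route from the paper's. The paper reads Corollary~\ref{co49} as a consequence of Theorem~\ref{te48}\,(vi) applied to $T$ and to $T^*$ \emph{themselves}: if $ZTZ^{-1}$ is a contraction then $T^*(Z^*Z)T\le Z^*Z$ with $Z^*Z$ invertible, and after scaling one obtains a positive operator $A\ge \Delta_T$ with $T^*AT\le A$, so $T$ (and likewise $T^*$, which is similar to a contraction as well) has a genuine $2$-isometric lifting by Theorem~\ref{te31} (see the remark preceding Corollary~\ref{co35}); hence $T,T^*\in\mathscr{C}_2$ and part (vi) of Theorem~\ref{te48} gives both realizations directly, with the ambient space in Cauchy pairing with a Dirichlet-type space $\D(\mu)$. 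You instead apply Theorem~\ref{te48}\,(vi) only to the contraction $C$ and to $C^*$ (in $\mathscr{C}_2$ via their isometric liftings) and transfer the realization across the similarity by renorming the invariant subspace $\D'_0$ so that the similarity becomes a unitary. That renorming step is sound: the new norm is equivalent, so the underlying set of vector-valued analytic functions, the invariance under the difference quotient, and the boundedness of point evaluations and of the backward shift are unchanged; thus $T^*$ (and symmetrically $T$) is unitarily equivalent to the backward shift on a Hilbert space of vector-valued analytic functions on $\De$, which does prove the literal statement (with the invariant subspace equal to the whole space). What your route buys is elementarity---no need for the type~II lifting theorem or the Treil--Volberg argument behind it. What it gives up is the structure the paper's route retains: your ambient space is a tailor-made equivalent renorming, no longer visibly the Cauchy dual of a Dirichlet-type space, the "invariant subspace" is trivial, and you obtain no $2$-isometric lifting of $T$ itself, whereas the paper's reading of the corollary rests precisely on the (later established) fact that operators similar to contractions belong to $\mathscr{C}_2$.
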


The following theorem refines the model of A. Olofsson \cite{Ol} for analytic Brownian unitaries. By Theorem \ref{te48}, (v), every operator in $\mathscr{C}_2$ has an analytic Brownian unitary dilation.
%This can be used to  dilations of the operators in $\mathscr{C}_2$.
\begin{theorem}\label{co-mu}
%Let $T \in \B$ satisfying (one of) the conditions (i)-(vi) in Theorem \ref{te48} and 
Let $S\in \mathcal{B}(\ka)$ be an analytic Brownian unitary 
%dilation for $T$ 
acting on $\ka$.
%$\ka \supset \h$. 
Let $\mathcal{E}=\n(S^*)$ and let $\mu$ be the positive $\mathcal{B}(\mathcal{E})$-valued measure defined on the $\sigma$-algebra ${\rm Bor}(\mathbb{T})$ of all Borel subsets of $\mathbb{T}$ by
\begin{equation}\label{eq-mu1}
\mu(\sigma)=P_{\mathcal{E}} F(\sigma) \Delta_S|_{\mathcal{E}}, \quad \sigma \in {\rm Bor}(\mathbb{T}),
\end{equation}
where $F$ is the $\mathcal{B}(\R(\Delta_S))$-valued spectral measure on $\mathbb{T}$ of the unitary operator $U$ with $U^*=S^*|_{\R(\Delta_S)}$. Then $S$ is unitarily equivalent to the forward shift on the Dirichlet space $\D(\mu)=\{\Theta k:k\in \ka\}$ of $\mathcal{E}$-valued analytic functions on $\De$, where
\begin{equation}\label{eq-mu2}
(\Theta k)(z)=\sum_{n\ge 0} (P_{\mathcal{E}}S'^{*n}k) z^n, \qquad z\in \De,
\end{equation}
$S'$ being the Cauchy dual operator of $S$,  and the norm on $\D (\mu)$ is induced by $\mu$ as in \cite[(3.1)]{Ol}.

Moreover, if $E_0=\delta^{-1} P_{\n(\Delta_S)} S|_{\R(\Delta_S)}$ and $\delta ={\rm cov}(S)$, then 
\begin{equation}\label{eq244}
\mathcal{E}=\{-\delta^{-1} E_0 U^*d\oplus d: d\in \R(\Delta_S)\}
\end{equation}
and $(1+\delta^2)\delta^{-4} \mu (\sigma)$ is an orthogonal projection, for every $\sigma \in {\rm Bor}(\mathbb{T})$.
\end{theorem}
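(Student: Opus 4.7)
The plan is to apply Olofsson's theorem \cite[Theorem 4.1]{Ol} to the analytic 2-isometry $S$, which immediately gives the unitary equivalence of $S$ with the forward shift on some Dirichlet-type space $\D(\tilde{\mu})$ via the map $\Theta$ of \eqref{eq-mu2}; the substance of the theorem is then the identification $\tilde{\mu} = \mu$, with $\mu$ given explicitly by \eqref{eq-mu1}, together with the description \eqref{eq244} of $\mathcal{E}$ and the projection statement at the end. All of these refinements will be extracted from the Brownian unitary block representation
$$S = \begin{pmatrix} V & \delta E_0 \\ 0 & U \end{pmatrix}$$
on $\n(\Delta_S) \oplus \overline{\R(\Delta_S)}$, with $V$ an analytic isometry (a shift), $E_0$ an isometry from $\overline{\R(\Delta_S)}$ onto $\n(V^*)$, and $U$ unitary. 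A direct computation then yields $\Delta_S = 0 \oplus \delta^2 I$ (so $\R(\Delta_S)$ is automatically closed), while solving $S^*(a \oplus d) = 0$ produces \eqref{eq244}.

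For the last assertion I would introduce the parametrization $\Phi \colon d \mapsto -\delta^{-1} E_0 U^* d \oplus d$ from $\R(\Delta_S)$ onto $\mathcal{E}$. Because $E_0$ and $U$ are isometric, $\Phi^*\Phi = (1 + \delta^{-2}) I$, which both implies $\Phi^* = (1 + \delta^{-2}) \Phi^{-1}$ on $\mathcal{E}$ and, via a one-line inner product computation, gives the projection formula $P_{\mathcal{E}}(0 \oplus d') = \tfrac{\delta^2}{1 + \delta^2}\,\Phi d'$. Applying this with $d' = \delta^2 F(\sigma) d$, and using $\Delta_S \Phi d = 0 \oplus \delta^2 d$, one obtains
$$(1 + \delta^2)\,\delta^{-4}\,\mu(\sigma) \;=\; \Phi F(\sigma) \Phi^{-1} \quad \text{on } \mathcal{E}.$$
Idempotence is immediate from $F(\sigma)^2 = F(\sigma)$, and self-adjointness follows from $\Phi^* = (1 + \delta^{-2}) \Phi^{-1}$ together with $F(\sigma)^* = F(\sigma)$.

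The main obstacle I expect is the identification $\tilde{\mu} = \mu$ inside Olofsson's model. The plan is to compare the norm on $\D(\tilde{\mu})$ prescribed by \cite[(3.1)]{Ol} with the one transported from $\ka$ through $\Theta$, tested on the constant functions $\Theta e$ for $e \in \mathcal{E}$: thanks to the Brownian unitary reduction, $S^*$ acts on $\overline{\R(\Delta_S)}$ exactly as the unitary $U^*$, so the Olofsson integral over $\mathbb{T}$ rewrites, via the spectral theorem for $U$, as an integration of $F$ against $\Delta_S|_{\mathcal{E}}$, which is precisely \eqref{eq-mu1}. Once this identification is in place, all the conclusions of the theorem are assembled from the previous two paragraphs.
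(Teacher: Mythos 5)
Your treatment of the second half of the theorem is correct and is essentially the paper's argument in streamlined form: the computation of $\mathcal{E}=\n(S^*)$ from the block matrix is the same, and your parametrization $\Phi d=-\delta^{-1}E_0U^*d\oplus d$ with $\Phi^*\Phi=(1+\delta^{-2})I$, the formula $P_{\mathcal{E}}(0\oplus d')=\tfrac{\delta^2}{1+\delta^2}\Phi d'$, and the identity $(1+\delta^2)\delta^{-4}\mu(\sigma)=\Phi F(\sigma)\Phi^{-1}$ repackage exactly the paper's computation (the paper concludes via ``idempotent contraction'', you via ``idempotent and self-adjoint''; both are fine).

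The one soft spot is the first assertion. Identifying the Olofsson measure by comparing norms ``tested on the constant functions $\Theta e$'' cannot work: in the norm \cite[(3.1)]{Ol} the Dirichlet part of a constant function vanishes, so $\|\Theta e\|_{\D(\tilde\mu)}=\|e\|_{\mathcal{E}}$ for \emph{every} positive operator measure $\tilde\mu$, and constants carry no information about the measure. The correct route (and the one the paper takes) is not a norm comparison at all, but an unwinding of the construction in \cite[Lemma 4.1]{Ol}: the measure there is built from the spectral measure of the minimal unitary extension of the defect isometry $\Delta_S^{1/2}k\mapsto\Delta_S^{1/2}Sk$ on $\overline{\R(\Delta_S)}$, and in the Brownian unitary case $\Delta_S=0\oplus\delta^2I$, so this isometry is precisely $U$ (already unitary, with spectral measure $F$), whence Olofsson's measure equals $P_{\mathcal{E}}\Delta_S^{1/2}F(\sigma)\Delta_S^{1/2}|_{\mathcal{E}}=P_{\mathcal{E}}F(\sigma)\Delta_S|_{\mathcal{E}}$, i.e.\ \eqref{eq-mu1}. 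You gesture at this mechanism in the same sentence (``the Olofsson integral rewrites, via the spectral theorem for $U$, as an integration of $F$ against $\Delta_S|_{\mathcal{E}}$''), so the fix is to drop the constants test and make this identification of the measure inside Olofsson's Lemma 4.1 the actual argument; with that change the proof matches the paper's.
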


\begin{proof}
%By Theorem \ref{te48}, (v), there exists an analytic Brownian unitary dilation $S$ on $\ka \supset \h$ for $T$. 
Let $U$ be the unitary operator on $\R(\Delta_S)$ with $U^*=S^*|_{\R(\Delta_S)}$, and let $F:{\rm Bor}(\mathbb{T}) \to \mathcal{B}(\R(\Delta_S))$ be the spectral measure of $U$. Since $\Delta_S= 0\oplus \delta ^2 I$ on $\n(\Delta_S) \oplus \R(\Delta_S)$, where $\delta={\rm cov(S)}$, we have $\Delta_S^{1/2}Sk= \Delta_S^{1/2}Uk=U\Delta_S^{1/2}k$ for $k \in \R(\Delta_S)$. The second relation ensures that $\Delta_S^{1/2}F(\sigma)k=F(\sigma)\Delta_S^{1/2}k$ for $k\in \R(\Delta_S)$. Thus, as in \cite[Lemma 4.1]{Ol}, we can associate to $S$ a positive $\mathcal{B}(\mathcal{E})$-valued measure on $\mathbb{T}$, which in this case has the form \eqref{eq-mu1}, where $\mathcal{E}=\n(S^*)$. Also, corresponding to this measure $\mu$, one can define the Dirichlet space $\D(\mu)$ with the norm induced by $\mu$ as in \cite[(3.1)]{Ol}. By \cite[Theorem 4.1]{Ol} we obtain that $S$ is unitarily equivalent, via the operator $\Theta :\ka \to \D(\mu)$ from \eqref{eq-mu2}, with the forward shift on $\D(\mu)$.

In addition, in this case the subspace $\mathcal{E}$ of $\ka$ can be easily determined and we are able to obtain more information about the measure $\mu$. Indeed, let us consider the canonical matrix representation of the form \eqref{eq12}  of $S$ on $\ka =\n(\Delta_S) \oplus \R(\Delta_S)$, where $V=S|_{\n(\Delta_S)}$ is a shift operator ($S$ being analytic), $Y=U$ is as before, and $E=\delta E_0$, with $E_0$ an isometry from $\R(\Delta_S)$ onto $\n(V^*)$. Then, an element $k=k_0\oplus k_1\in \n(\Delta_S) \oplus \R(\Delta_S)$ belongs to $\mathcal{E}=\n(S^*)$ if and only if $V^*k_0=0$ and $\delta E_0^*k_0 +U^*k_1=0$. Since $\n(V^*)=\R(E_0)$, it follows that $k_0=E_0d_0$ for some element $d_0 \in \R(\Delta_S)$, while the previous equality yields $k_1=- \delta Ud_0$. Considering $d_0$ of the form $d_0 =-\delta^{-1} U^*d$ with $d\in \R(\Delta_S)$, we infer that $\mathcal{E}$ has the form \eqref{eq244}.

Now let $e\in \mathcal{E}$ and $d\in \R(\Delta_S)$ be such that $e=-\delta ^{-1} E_0U^* d\oplus d$. Then 
$$
(1+ \delta^{-2} )d=e\oplus \delta^{-2} (\delta E_0 U^* d \oplus UU^*d)=e \oplus \delta^{-2} S(U^*d).
$$
It follows that 
$$
P_{\mathcal{E}}d= \frac{\delta^2}{1+\delta^2}e=\frac{\delta^2}{1+\delta^2}(-\frac{1}{\delta} E_0 U^* d\oplus d), \quad d\in \R(\Delta_S).
$$     
Thus, for $\sigma \in {\rm Bor}(\mathbb{T})$ and $e\in \mathcal{E}$ as above, we obtain
\begin{eqnarray*}
\mu (\sigma)e &=& P_{\mathcal{E}}F(\sigma)\Delta_Se=\delta^2 P_{\mathcal{E}}F(\sigma)P_{\R(\Delta_S)}e=\delta^2 P_{\mathcal{E}}F(\sigma)d\\
&=& \frac{\delta^4}{1+\delta^2}(-\frac{1}{\delta} E_0 U^*F(\sigma)d \oplus F(\sigma)d)\\
&=& \frac{\delta^4}{1+\delta^2}(-\frac{1}{\delta}E_0 F(\sigma)E_0^* E_0 U^* d \oplus F(\sigma) d)\\
&=&  \frac{\delta^4}{1+\delta^2}(E_0 F(\sigma)E_0^* P_{\n(\Delta_S)}e \oplus F(\sigma)P_{\R(\Delta_S)}e)\\
&=&  \frac{\delta^4}{1+\delta^2}(E_0 F(\sigma)E_0^* \oplus F(\sigma))(e_0 \oplus e_1),  
\end{eqnarray*}
where $e_0=P_{\n(\Delta_S)}e$, $e_1=P_{\R(\Delta_S)}e$. Finally, keeping in mind that $F$ is a spectral measure, so $F(\sigma)^2=F(\sigma)$, we obtain 
\begin{eqnarray*}
\left[ \frac{1+\delta^2}{\delta^4} \mu(\sigma) \right] ^2e &=& (E_0 F(\sigma)E_0^* \oplus F(\sigma) )(E_0 F(\sigma) E_0^* e_0 \oplus F(\sigma) e_1)\\
&=& E_0 F(\sigma) ^2 E_0^* e_0 \oplus F(\sigma) ^2 e_1 =E_0 F(\sigma) E_0 e_0 \oplus F(\sigma) e_1\\
&=& \frac{1+\delta ^2}{\delta^4} \mu (\sigma)e,  
\end{eqnarray*}
for every $\sigma \in {\rm Bor}(\mathbb{T})$ and $e\in \mathcal{E}$. Since $(1+\delta^2) \delta^{-4}\mu(\sigma)$ is a contraction, we conclude that $(1+\delta^2)\delta^{-4} \mu(\sigma)$ is an orthogonal projection for $\sigma \in {\rm Bor}(\mathbb{T})$. This ends the proof. 
\end{proof}

\begin{remark}\label{reMuller}
\rm
V. M\"uller proved in \cite[Corollary 2.3]{M} that if $\h$ is a separable Hilbert space, then, for any operator $T\in \B$, $T$ and $T^*$ are unitarily equivalent with restrictions to invariant subspaces of a backward weighted shift $B_{\alpha}\in \mathcal{B}(\ka)$ where $\ka =\ell_+^2(\h')$ and $\h'$ is a separable Hilbert space. Recall that such an operator $B_{\alpha}$ is defined with respect to a bounded sequence of positive numbers $\alpha =\{\alpha _n\}_{n\ge 1}$ by the relation $B_{\alpha}(\{x_n\}_{n\ge 0})=\{\alpha _nx_n\}_{n\ge 1}$ for all square summable sequences $\{x_n\}_{n\ge 0}\in \ka$ of vectors $x_n \in \h'$. So the operators $T$ and $T^*$ can be lifted to the forward weighted shift $S_{\alpha}=B_{\alpha}^*$ on $\ka$, which is an analytic operator, but $S_{\alpha}$ is not 2-isometric, in general.    
\end{remark}

%\section{Two-isometric liftings of type I}\label{sect:type0}
\section{TWO-ISOMETRIC LIFTINGS OF TYPE I}\label{sect:type0}
\smallskip

\subsection{Type I and type II liftings}
Let $T\in  \B$ and suppose that $S\in \mathcal{B}(\ka)$ is a lifting of $T$ on $\ka \supset \h$. Then the canonical representation of $S$ on $\ka =\h^{\perp} \oplus \h$ has the form
\begin{equation}\label{eq21}
S=
\begin{pmatrix}
W & X\\
0 & T
\end{pmatrix},
\end{equation}
with $W=S|_{\h^{\perp}}$ and $X =P_{\h^{\perp}}S|_{\h}$. In general, the operators $W,X$ and $T$ can be arbitrary,
%are not connected between them, 
but they have to satisfy some constraints whenever $S$ belongs to some particular classes of operators. Here we are interested in the case when $S$ is a 2-isometry. In this case, $W$ is also a 2-isometry (as a restriction to an invariant subspace for $S$), and $\Delta_S$ has the form
\begin{equation}\label{eq22}
\Delta_S=
\begin{pmatrix}
\Delta_W & W^*X\\
X^*W & X^*X+\Delta_T
\end{pmatrix}.
\end{equation}
Since necessarily $\Delta_S \ge 0$ and $\Delta_W\ge 0$, one also has $X^*X+\Delta_T\ge 0$. Therefore (see \cite{FF}, \cite{FFK2}), there exists a contraction $\Gamma : \overline{\R(X^*X+\Delta_T)}\to \overline{\R(\Delta_W)}$ such that
\begin{equation}\label{eq23}
W^*X=\Delta_W^{1/2}\Gamma (X^*X+\Delta_T)^{1/2}.
\end{equation}
This relation gives $X^*W|_{\n(\Delta_W)}=0$, which by \eqref{eq22} implies that $\Delta_S|_{\n(\Delta_W)}=0$, hence $\n(\Delta_W)\subset \n(\Delta_S)$. But $\n(\Delta_W)$ is invariant for $W=S|_{\h^{\perp}}$ and so for $S$, while $W|_{\n(\Delta_W)}=S|_{\n(\Delta_W)}$ is an isometry. When $W$ is an isometry on $\h^{\perp}$, one has $\h^{\perp}=\n(\Delta_W)\subset \n(\Delta_S)$. Therefore $W^*X=0$ in this case. Conversely, if $\h^{\perp}\subset \n(\Delta_S)$, then, since $\h^{\perp}$ is invariant for $S$ by \eqref{eq21}, it follows that $W=S|_{\h^{\perp}}$ is an isometry.

On the other hand, we see from \eqref{eq23} that $W^*X=0$ if and only if $\Gamma=0$, because $\R(\Gamma)\subset \overline{\R(\Delta_W)}$. Notice however that, in general, it is difficult to have significant information about the operator $\Gamma$. Using \eqref{eq22}, one has $W^*X=0$ if and only if $\h$ is invariant for $S^*S$. In this case, by \eqref{eq22}, we have $\n(\Delta_S)=\n(\Delta_W) \oplus \n(X^*X+\Delta_T)$. In particular, if $X^*X+\Delta_T=0$, which forces $T$ and $X$ to be contractions (as $D_T^2=X^*X\ge 0$), then $W^*X=0$. In this case it is easy to see that $S$ is an extension of the minimal isometric lifting of $T$, because $\n(\Delta_S) =\n(\Delta_W) \oplus \h$ is an invariant subspace for $S$.

In light of the preceding discussion, we introduce the following definition. 

\begin{definition}\label{def:type}
Using the previous notation, we say that a 2-isometry $S$ of the form \eqref{eq21} on $\ka=\h^{\perp}\oplus \h$ is a \emph{lifting of type I} for $T$ whenever $\h^{\perp}\subset \n(\Delta_S)$. We say that $S$ is a \emph{lifting of type II} for $T$ whenever $\h$ is an invariant subspace for $S^*S$.
\end{definition}

Observe that $S^*S\h \subset \h$ if and only if $W^*X=0$ (in \eqref{eq23}), hence a lifting of type I is also a lifting of type II.

\subsection{A-isometries}

We describe now the operators which have 2-isometric liftings of type I. These operators form a special class of $A$-isometries as follows.

\begin{theorem}\label{te21}
For $T\in \B$ the following statements are equivalent:

\begin{itemize}
\item[(i)] $T$ has a 2-isometric lifting of type I on a Hilbert space containing $\h$;

\item[(ii)] $T$ is either an $A$-isometry for a positive operator $A\neq 0$ on $\h$ with $\Delta_T\le A$, or $T$ is a strongly stable contraction;

\item[(iii)] $T$ has an extension $\widetilde{T}$ on a Hilbert space $\m\supset \h$ with $\widetilde{T}$ of the form
\begin{equation}\label{eq24}
\widetilde{T}=
\begin{pmatrix}
C & \delta E\\
0 & U
\end{pmatrix}
\end{equation}
on a decomposition $\m=\m_0\oplus \m_1$, where $C,E$ are contractions, $U$ is unitary, and $\delta$ is a positive scalar, such that there exist a Hilbert space $\m'$ and isometries $J_0:\D_C\to \m'$, $J_1:\D_E\to \m'$ satisfying the condition
\begin{equation}\label{eq25}
D_CJ_0^*J_1D_E+C^*E=0;
\end{equation}

\item[(iv)] $T$ has a 2-isometric lifting $S$ of the form \eqref{eq21} such that
\begin{equation}\label{eq26}
X^*\Delta_WX+2 {\rm Re}(X^*W^*XT)=0.
\end{equation}
\end{itemize}

Moreover, if these statements are true, then the lifting $S$ of $T$ in (i) and (iv) can be chosen minimal with ${\rm cov}(S)=\|A\|^{1/2}$ for $A$ from (ii), or with ${\rm cov}(S) \le \delta$ for $\delta$ from \eqref{eq24}.
\end{theorem}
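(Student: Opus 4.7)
The plan is to establish the four-way equivalence via the cycle (i)$\Rightarrow$(iv)$\Rightarrow$(ii)$\Rightarrow$(i) together with the separate equivalence (ii)$\Leftrightarrow$(iii); the minimality and covariance claims will fall out of the explicit constructions used in (ii)$\Rightarrow$(i) and (ii)$\Rightarrow$(iii).

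For (i)$\Rightarrow$(iv): if $S$ is of type I, the inclusion $\h^{\perp}\subset\n(\Delta_S)$ forces $\Delta_W=0$ (so $W$ is an isometry), and inspection of block \eqref{eq22}, or of the representation \eqref{eq23} of $W^*X$, gives $W^*X=0$; both summands in \eqref{eq26} then vanish identically. For (iv)$\Rightarrow$(ii) the 2-isometry identity $S^*\Delta_S S=\Delta_S$ applied to the block form \eqref{eq21} yields, at its $(2,2)$-entry,
\[
X^*\Delta_W X + 2\,\mathrm{Re}(X^*W^*XT) + T^*(X^*X+\Delta_T)T = X^*X+\Delta_T,
\]
which together with \eqref{eq26} collapses to $T^*AT=A$ for $A:=X^*X+\Delta_T$, and $A\ge\Delta_T$ is automatic. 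If $A\ne 0$ we are in the first case of (ii); if $A=0$ then $X^*X=I-T^*T$ makes $T$ a contraction, and either $T$ is strongly stable (second case) or the strong operator limit $A_\infty$ of $T^{*n}T^n$ is nonzero and returns $T$ to the first case via $T^*A_\infty T=A_\infty$ and $A_\infty\ge 0\ge\Delta_T$. For (ii)$\Rightarrow$(i) in the case $A\ne 0$ I put $X:=(A-\Delta_T)^{1/2}$ as a map from $\h$ into the first coordinate of $\h^{\perp}:=\ell_+^2(\overline{\R(X)})$, take $W$ to be the forward shift on $\h^{\perp}$, and build $S$ by \eqref{eq21}; the identities $\Delta_W=0$ and $W^*X=0$ give $\Delta_S=0\oplus A$ and $S^*\Delta_S S=0\oplus T^*AT=\Delta_S$, so $S$ is a type I 2-isometric lifting with ${\rm cov}(S)=\|A\|^{1/2}$. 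In the strongly stable subcase, the minimal isometric lifting of $T$ already serves as a type I 2-isometric lifting, with covariance zero.

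For (ii)$\Rightarrow$(iii) when $A\ne 0$, set $\delta:=\|A\|^{1/2}$; the relation $T^*AT=A$ produces an isometry $V$ on $\overline{\R(A^{1/2})}$ through $VA^{1/2}=A^{1/2}T$, which extends to a unitary $U$ on some $\m_1\supset\overline{\R(A^{1/2})}$. I take $\m_0:=\h$, $G:=(I-A/\delta^2)^{1/2}\in\B$, and embed $\h$ isometrically into $\m:=\m_0\oplus\m_1$ by $\iota h:=(Gh,\,\delta^{-1}A^{1/2}h)$. For $\widetilde T$ of the form \eqref{eq24}, the invariance condition $\widetilde T\iota=\iota T$ reduces to the single operator equation $CG+EA^{1/2}=GT$; the Treil--Volberg generalization of the commutant lifting theorem (flagged in the introduction as a key tool of this section) produces contractions $C,E$ satisfying this equation together with isometries $J_0:\D_C\to\m'$ and $J_1:\D_E\to\m'$ for which the column $\bigl(\begin{smallmatrix} C & E \\ J_0 D_C & J_1 D_E\end{smallmatrix}\bigr)$ is an isometry from $\m_0\oplus\m_1$ into $\m_0\oplus\m'$, which is exactly \eqref{eq25}. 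For the converse (iii)$\Rightarrow$(ii), the natural choice is $\widetilde A:=0\oplus\delta^2 I$ on $\m_0\oplus\m_1$: a direct block computation gives $\widetilde T^*\widetilde A\widetilde T=\widetilde A$, while the difference $\widetilde A-\Delta_{\widetilde T}$ factors as $M^*K^*KM$ with $M:=D_C\oplus\delta D_E$ and $K:=(J_0,J_1):\D_C\oplus\D_E\to\m'$, so \eqref{eq25} delivers $\widetilde A\ge\Delta_{\widetilde T}$. Transporting back to $\h$ through $A:=P_{\h}\widetilde A|_{\h}$ yields $T^*AT=A$ and $A\ge\Delta_T$, placing $T$ in (ii); the strongly stable branch is invoked only when the restricted $A$ happens to vanish.

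The minimality claim is obtained by replacing $S$ with its restriction to $\ka_0:=\bigvee_{n\ge 0}S^n\h$, which never increases the covariance, so ${\rm cov}(S)=\|A\|^{1/2}$ is preserved in the route through (ii)$\Rightarrow$(i) and ${\rm cov}(S)\le\delta$ in the route through (iii). I expect the main obstacle to lie in (ii)$\Rightarrow$(iii): solving $CG+EA^{1/2}=GT$ while simultaneously securing contractivity of $C$ and $E$ and the coupling identity \eqref{eq25} is the genuinely delicate factorization step, and pinning down the correct auxiliary space $\m'$ together with the isometries $J_0,J_1$ is where the Treil--Volberg commutant lifting machinery must be invoked with some care.
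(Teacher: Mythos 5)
Most of your cycle matches the paper's own argument: (i)$\Rightarrow$(iv) via $\Delta_W=0$ and $W^*X=0$, (iv)$\Rightarrow$(ii) via the $(2,2)$-entry identity (this is exactly \eqref{eq27}), and (ii)$\Rightarrow$(i) via the shift-plus-$(A-\Delta_T)^{1/2}$ lifting, which is precisely the operator $S_{A,T}$ of \eqref{eq28} with covariance $\|A\|^{1/2}$. Your (iii)$\Rightarrow$(ii) by compressing $\widetilde{A}=0\oplus\delta^2I$ and factoring $\delta^2P_{\m_1}-\Delta_{\widetilde T}$ through $\bigl(J_0D_C\ \ \delta J_1D_E\bigr)$ is also correct (the paper records this computation right after the theorem and routes (iii)$\Rightarrow$(i) through a directly constructed Brownian isometric lifting instead), and it does yield ${\rm cov}(S)\le\delta$ when fed back into (ii)$\Rightarrow$(i). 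Two small slips: if the compressed $A=\delta^2P_{\h}P_{\m_1}|_{\h}$ vanishes you cannot conclude strong stability, you should pass to $A_0=s\text{-}\lim T^{*n}T^n$ as in your own (iv)$\Rightarrow$(ii) step; and minimality of $S_{A,T}$ is best checked directly (it holds), although restriction to $\bigvee_{n\ge0}S^n\h$ does preserve the covariance here because $\Delta_{S_{A,T}}$ is supported in $\h$.

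The genuine gap is your (ii)$\Rightarrow$(iii). The claim that ``the Treil--Volberg generalization of the commutant lifting theorem produces contractions $C,E$ satisfying $CG+EA^{1/2}=GT$ together with isometries $J_0,J_1$ making the column an isometry'' is not what that theorem provides: it lifts an intertwiner $A_0T=\widehat{T}A_0$ to $\widehat{A}T=\widehat{V}\widehat{A}$ with preserved norm, and it requires $T^*T\ge I$; in this paper it is invoked only in Theorem \ref{te25}, i.e.\ for expansive operators. Under hypothesis (ii) of Theorem \ref{te21} the operator $T$ need not be expansive, and your embedding $\iota h=(Gh,\delta^{-1}A^{1/2}h)$ with $G=(I-\delta^{-2}A)^{1/2}$ is exactly the expansive-case construction: producing the contraction $C$ there rests on Douglas' lemma applied to $T^*(I-\delta^{-2}A)T\ge I-\delta^{-2}A$, which, given $T^*AT=A$, is equivalent to $T^*T\ge I$. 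So the step you flag as ``genuinely delicate'' is not merely delicate; the tool you propose cannot supply it in the generality needed. The paper proves (iii) by a different route: starting from a type I $2$-isometric lifting $S$ (already obtained from (ii)), it applies the Agler--Stankus extension theorem \cite[Theorem 5.80]{AS2} to embed $S$ into a Brownian unitary $\widetilde{S}$ of the same covariance, and then reads \eqref{eq24} and \eqref{eq25} off the $3\times3$ block matrix of $\widetilde{S}$: the isometries $J_0,J_1$ come from the polar decompositions $C'=J_0D_C$, $E'=J_1D_E$ of the corner entries, whose ranges lie in $\m'=\n(V^*)$, and \eqref{eq25} is the relation $\widetilde{V}^*\widetilde{E}=0$. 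To close your argument you should either follow that route or supply an independent substitute for the Brownian-unitary extension step.
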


\begin{proof}
Let $S\in \mathcal{B}(\ka)$ be a $2$-isometry as in \eqref{eq21}, with $\ka \ominus \h \subset \n(\Delta_S)$. Thus $W=S|_{\ka \ominus \h}$ is an isometry. Then $\Delta _W=0$ and, as we noticed before, one has $W^*X=0$. Therefore, the condition \eqref{eq26} from (iv) is satisfied for such a lifting $S$ of $T$. We obtain that (i) implies (iv).

Assume that $T$ has a 2-isometric lifting $S$ on $\ka \supset \h$ of the form \eqref{eq21} with the operators $W$ and $X$. Then $S^*\Delta_SS=\Delta_S$, and this implies (by  \eqref{eq21} and \eqref{eq22}) the relation
\begin{equation}\label{eq27}
X^*\Delta_WX+2{\rm Re}(X^*W^*XT)+T^*(X^*X+\Delta_T)T=X^*X+\Delta_T.
\end{equation}
Now, if $S$ verifies the condition \eqref{eq26}, then one obtains from \eqref{eq27} that $T$ is an $A$-isometry, where $A=X^*X+\Delta_T\ge 0$ (as $\Delta_S\ge 0$ in \eqref{eq22}), and so $A\ge \Delta_T$. This condition in the case $A=0$ forces $T$ to be a contraction, but in this case $T$ is an $A_0$-isometry with $A_0=s-\lim_{n\to \infty} T^{*n}T^n\ge \Delta_T=-D_T^2$. Here the case $A_0= 0$ corresponds to $T$ being strongly stable, i.e. $\|T^nh\|\to 0$ for $h\in \h$. We have proved that (iv) implies (ii).

Next, let $T$ be an operator as in (ii), i.e. satisfying $T^*AT=A$ with $0\neq A\ge0$ and $A\ge \Delta_T$. Then the operator $A_T:=A-\Delta_T$ is positive and one can suppose $A_T\neq 0$, or in other words, $T$ is not a 2-isometry.

We define the lifting $S_{A,T}$ of $T$ on the space $\h_{A,T}:=\ell_+^2(\overline{\R(A_T)})\oplus \h$ with the block matrix
\begin{equation}\label{eq28}
S_{A,T}=
\begin{pmatrix}
S_+ & \widetilde{A}_T\\
0 & T
\end{pmatrix},
\end{equation}
where $S_+$ is the forward shift on $\ell_+^2(\overline{\R(A_T)})$ and $\widetilde{A}_T=JA_T^{1/2}$, $J$ being the canonical injection of $\overline{\R(A_T)}$ into $\ell_+^2(\overline{\R(A_T)})$. Then, on the above decomposition of $\h_{A,T}$, we have
$$
\Delta_{S_{A,T}}=0\oplus (A_T+\Delta_T)=0\oplus A .
$$
Therefore,
$$
S^*_{A,T}\Delta_{S_{A,T}}S_{A,T}=0 \oplus T^*AT=0\oplus A=\Delta_{S_{A,T}}.
$$
Hence $S_{A,T}$ is a 2-isometry and $S_{A,T}|_{\h_{A,T}\ominus \h}=S_+$ is an isometry. We conclude that $S_{A,T}$ is a 2-isometric lifting of type I for $T$. 

In the case when $T$ is a contraction, it has even an isometric lifting (so of type I and of covariance zero as a $2$-isometry). 
In this case, if $T^*AT=A\neq 0$, then $A_T^{1/2} \ge D_T$. Therefore $\overline{\R(A_T)}=\D_T \oplus (\overline{R(A_T)}\ominus \D_T)$. Using this and \eqref{eq28}, one infers that $S_{A,T}$ is a 2-isometric lifting of type I for $T$ and for the minimal isometric lifting of $T$. But when $T$ is a strongly stable contraction, $T$ cannot be an $A$-isometry with $A\neq 0$, so $A_T=D_T^2$ and $S_{A,T}$ is even the minimal isometric lifting of $T$. Hence (ii) implies (i), and thus the assertions (i), (ii) and (iv) are equivalent.
We also remark that ${\rm cov}(S_{A,T})=\|A\|^{1/2}\neq 0$ and thus ${\rm cov}(S_{A,T})=0=\|A_0\|$, when $T$ is a strongly stable contraction.

Now we show that $S_{A,T}$ is a minimal lifting of $T$, that is, we have $\h_{A,T}=\bigvee_{n\ge0} S^n_{A,T}\h$. To see this, let $k=d\oplus h\in \h_{A,T}$ with $h \in \h$ and $d=\bigoplus_0^{\infty} d_j\in \ell_+^2(\overline{\R(A_T)})$ be such that $k$ is orthogonal to $\bigvee_{n\ge 0} S^n_{A,T}\h$. As $k \perp \h$, it follows that $h=0$. So $k=d\perp S_{A,T}\h= \widetilde{A}_T\h\oplus T\h$. Working in the space $\overline{\R(A_T)}$, we have $d_0\perp A_T^{1/2}\h$, and thus $d_0=0$. By induction, one obtains $d_j=0$ for each $j\ge 1$; consequently $k=d=0$. Thus the minimality condition is true.

The final part of the proof consists in showing the equivalence of (i) with (iii).
Assume that $S$ is a 2-isometric lifting of $T$ on $\ka$ of covariance $\delta$, with $V=S|_{\ka \ominus \h}$ an isometry. Without loss of generality we can assume $\delta>0$ (otherwise, $S$ is an isometry and so $T$ is a contraction, hence $T$ has trivially the form \eqref{eq24}).
Then, by \cite[Theorem 5.80]{AS2}, $S$ has a Brownian unitary extension $\widetilde{S}$ on $\widetilde{\ka}\supset \ka$ of covariance $\delta$ with the canonical representation on $\widetilde{\ka}= \n(\Delta_{\widetilde{S}})\oplus \overline{\R(\Delta_{\widetilde{S}})}$ of the form
$$
\widetilde{S}=
\begin{pmatrix}
\widetilde{V} & \delta \widetilde{E}\\
0 & U
\end{pmatrix}.
$$
Here $\widetilde{V}, \widetilde{E}$ are isometries with $\n(\widetilde{V}^*)=\R(\widetilde{E})$ and $U$ is unitary.

Now, $\widetilde{S}$, as an extension of $S$, has on the decomposition $\widetilde{\ka}=(\ka \ominus \h)\oplus \h \oplus (\widetilde{\ka}\ominus \ka)$ a block matrix of the form
$$
\widetilde{S}=
\begin{pmatrix}
V & \star & \star\\
0 & T & \star\\
0 & 0 & \star
\end{pmatrix}.
$$
Since $\widetilde{S}|_{\ka \ominus \h}=V$ is an isometry and $\widetilde{S}$ is a 2-isometry, it follows that $\h':= \ka \ominus \h \subset \n(\Delta_{\widetilde{S}})$ and $\widetilde{V}|_{\h'}=V$. Inserting $V$ into the matrix representation of $\widetilde{S}$ on $\widetilde{\ka}=\h' \oplus (\n(\Delta _{\widetilde{S}})\ominus \h') \oplus \overline{\R(\Delta_{\widetilde{S}})}$ we get
$$
\widetilde{S}=
\begin{pmatrix}
V & C' & \delta E'\\
0 & C & \delta E\\
0 & 0 & U
\end{pmatrix}.
$$
Here we have represented the isometries $\widetilde{V}$ on $\h'\oplus \m_0$, where $\m_0= \n(\Delta_{\widetilde{S}})\ominus \h'$, and $\widetilde{E}$ from $\m_1= \overline{\R(\Delta_{\widetilde{S}})}$ into $\h'\oplus \m_0$, as
$$
\widetilde{V}=
\begin{pmatrix}
V & C'\\
0 & C
\end{pmatrix},
\quad
\widetilde{E}=
\begin{pmatrix}
E'\\
E
\end{pmatrix},
$$
where $C,C',E$ and $E'$ are contractions satisfying 
$$\widetilde{V}^*\widetilde{E}=V^*E'\oplus (C'^*E'+C^*E)=0 .$$ Also, we have $V^*C'=0$ because $V$ and $\widetilde{V}$ are isometries.

Comparing the two $3\times 3$ matrices of $\widetilde{S}$ we infer
$$
\begin{pmatrix}
T & \star\\
0 & \star
\end{pmatrix}
\begin{bmatrix}
\h\\
\widetilde{\ka}\ominus \ka
\end{bmatrix}
=P_{\widetilde{\ka}\ominus \h'}\widetilde{S}|_{\widetilde{\ka}\ominus \h'}=
\begin{pmatrix}
C & \delta E\\
0 & U
\end{pmatrix}
\begin{bmatrix}
\m_0\\
\m_1
\end{bmatrix},
$$
the two block matrices being given on two different decompositions of $\m:=\widetilde{\ka} \ominus \h'$. Hence $T$ has an extension on $\m=\m_0\oplus \m_1$ of the form \eqref{eq24}. 

It remains now to verify the condition \eqref{eq25}.
Indeed, since $\widetilde{V}$ and $\widetilde{E}$ are isometries, we have $C^*C+C'^{*}C'=I_{\m_0}$ and $E^*E+E'^*E'=I_{\m_1}$. Therefore, $C'^*C'=D_C^2$ and $E'^*E' =D_E^2$.
Since $V^*C'=0$ and $V^*E'=0$, we have $\R(C')\cup \R(E') \subset \n(V^*)=:\m'$. Thus we infer by polar decompositions that $C'=J_0D_C$ and $E'=J_1D_E$, where $J_0,J_1$ are isometries from $\D_C$, respectively from $\D_E$, into $\m'$.
So, $J_0$ and $J_1$ are the canonical mappings of $\D_C=\overline{\R(C'^*)}$ onto $\overline{\R(C')}$, respectively of $\D_E=\overline{\R(E'^*)}$ onto $\overline{\R(E')}$, in the space $\m'$. Finally, the condition $\widetilde{V}^*\widetilde{E}=0$ implies $C'^*E'+C^*E=0$, which becomes $D_CJ_0^*J_1D_E+C^*E=0$, i.e. the condition \eqref{eq25}. We completed the proof that (i) implies (iii).

Conversely, let us assume that $T$ has an extension $\widetilde{T}$ of the form \eqref{eq24} on $\m=\m_0\oplus \m_1$, as in (iii). We denote by $D$ the operator $D=\begin{pmatrix} J_0D_C & J_1D_E \end{pmatrix}$ from $\m_0\oplus \m_1$ into $\m'$. Consider also the operator $J$ from $\m'$ into $\m_2:=\ell_+^2(\overline{\R(D)})$ defined as the canonical injection of $\overline{\R(D)}$ into $\m_2$. Let $\widetilde{D}_C=JJ_0D_C$, $\widetilde{D}_E=JJ_1D_E$, and let $\widehat{S}$ be the (minimal) lifting of $\widetilde{T}$ acting on the space $\widetilde{\m}:= (\m_2 \oplus \m_0)\oplus \m_1$ defined by the block matrix
$$
\widehat{S}=
\begin{pmatrix}
V'& \delta F\\
0 & U
\end{pmatrix}.
$$
Here $V'$ on $\m_2\oplus \m_0$ and $F$ from $\m_1$ into $\m_2\oplus \m_0$ have the representations
$$
V'=
\begin{pmatrix}
S_+ & \widetilde{D}_C\\
0 & C
\end{pmatrix},
\quad F=
\begin{pmatrix}
\widetilde{D}_E\\
E
\end{pmatrix},
$$
where $S_+$ is the forward shift on $\m_2$, while $U, C, E$ and $\delta>0$ are as in \eqref{eq24}. It is clear that $V'$ and $F$ are isometries and that
$$
V'^*F=S_+^*\widetilde{D}_E\oplus (\widetilde{D}_C^*\widetilde{D}_E+C^*E)=0\oplus (D_CJ_0^*J_1D_E+C^*E)=0,
$$
taking into account that $\n(S_+^*)=J\overline{\R(D)}\supset \R(\widetilde{D}_E)$ and using the relation \eqref{eq25}. Thus, one obtains $\Delta_{\widehat{S}}=0\oplus \delta ^2I_{\m_1}$, and we conclude that $\widehat{S}$ is a 2-isometry, in fact even a Brownian isometry with $\delta ^{-2}\Delta_{\widehat{S}}$ an orthogonal projection.

Expressing now $T$ in the matrix of its extension $\widetilde{T}$ on $\m=\h\oplus (\m\ominus \h)$ in the form
$$
\widetilde{T}=
\begin{pmatrix}
T & \star\\
0 & \star
\end{pmatrix},
$$
and then inserting $\widetilde{T}$ in the matrix of $\widehat{S}$ (as a lifting of $\widetilde{T}$), we infer that the subspace $\h':=\m_2 \oplus \h$ is invariant for $\widehat{S}$ and that $S'=\widehat{S}|_{\h'}$ is a 2-isometric lifting for $T$ with $S'|_{\m_2}=S_+$ an isometry. So $T$ satisfies (i). In addition, since $S'^*S'\h \subset \h$, we get $\Delta_{S'}=P_{\h}\Delta_{\widehat{S}}|_{\h}$ and ${\rm cov}(S')\le \delta$. If $\h \cap \m_1\neq \{0\}$, then ${\rm cov}(S')=\delta$, while if $\h \subset \m_0$, then $T$ is a contraction and $S'=V'|_{\h'}$ is an isometry with ${\rm cov}(S')=0$. Notice that, by construction, $S'$ is not minimal. One can consider $S_0:=S'|_{\ka_0}$ where $\ka_0=\bigvee_{n\ge 0}S'^n\h$. Since $\ka_0$ is reducing for $S'$ (see Remark \ref{re311} below), it follows that $S_0$ is a minimal 2-isometric lifting of type I for $T$ with ${\rm cov}(S_0)\le \delta$. Hence (iii) implies (i) and all assertions are now proved. 
\end{proof}

It is easy to see that every operator $T$ of the form \eqref{eq24} on $\h=\h_0 \oplus \h_1$ is a $P_{\h_1}$-isometry. In addition, one has
$$
\delta ^2 P_{\h_1} -\Delta_T=
\begin{pmatrix}
D_C^2 & -\delta C^*E\\
-\delta E^*C & \delta ^2 D_E^2
\end{pmatrix}.
$$
Thus, when the condition \eqref{eq25} holds, i.e. $-C^*E=D_CJ_0^*J_1D_E$, the above matrix is positive. Hence $\delta^2 P_{\h_1} \ge \Delta_T$. Applying for such an operator $T$ the arguments used in the proof of equivalences of (i) with (ii) and (iii), we deduce the following result.

\begin{corollary}\label{co22}
For $T\in \B$ the following statements are equivalent:

\begin{itemize}
\item[(i)] $T$ has the form \eqref{eq24} and the condition \eqref{eq25} holds true;
\item[(ii)] $T$ is a $P$-isometry for an orthogonal projection $P$ with $\delta^2 P\ge \Delta_T$ and some scalar $\delta >0$;
    \item[(iii)] $T$ has a (minimal) Brownian isometric lifting $S$ of type I, with $\delta^{-2} \Delta_S$ an orthogonal projection and $\delta={\rm cov}(S)$.
\end{itemize}
\end{corollary}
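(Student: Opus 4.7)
The plan is to establish the three equivalences by specializing the arguments of Theorem~\ref{te21} to the case $A = \delta^2 P$ with $P$ an orthogonal projection on $\h$. The easiest step, (i) $\Rightarrow$ (ii), is already contained in the preliminary matrix computation preceding the corollary: any $T$ of the form \eqref{eq24} is automatically a $P_{\h_1}$-isometry, and condition \eqref{eq25} forces $\delta^2 P_{\h_1} - \Delta_T$ to be positive. Setting $P := P_{\h_1}$ delivers (ii).

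For (ii) $\Rightarrow$ (iii), I would apply the $S_{A,T}$ construction from the proof of (ii) $\Rightarrow$ (i) in Theorem~\ref{te21} with $A = \delta^2 P$. Since $A_T := A - \Delta_T \ge 0$, formula \eqref{eq28} produces a minimal $2$-isometric lifting of type I on $\ell_+^2(\overline{\R(A_T)}) \oplus \h$ with covariance operator $\Delta_{S_{A,T}} = 0 \oplus \delta^2 P$; thus $\delta^{-2} \Delta_{S_{A,T}}$ is an orthogonal projection and ${\rm cov}(S_{A,T}) = \delta$. To see that $S_{A,T}$ is Brownian isometric, I would inspect its canonical block matrix on $\n(\Delta_{S_{A,T}}) \oplus \overline{\R(\Delta_{S_{A,T}})}$: the diagonal block on the null space is isometric (automatic for any $2$-isometry), the identity $E^{*}E = \delta^2 I$ on the range forces $\delta^{-1} E$ to be an isometry into $\n(V^*)$, and the bottom-right block identifies with the compression of $T$ to $\R(P)$, whose unitarity must be extracted from the $P$-isometry condition together with the projection property of $\delta^{-2} \Delta_{S_{A,T}}$.

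For (iii) $\Rightarrow$ (i), I would start from a Brownian isometric lifting $S = \begin{pmatrix} V & \delta E_0 \\ 0 & U \end{pmatrix}$ on $\ka_0 \oplus \ka_1 = \n(\Delta_S) \oplus \overline{\R(\Delta_S)}$; the type-I hypothesis gives $\ka^\perp \subset \ka_0$, hence $\ka_0 = \ka^\perp \oplus \n_0$ and $\h = \n_0 \oplus \ka_1$. Splitting $V$ and $E_0$ according to this decomposition, the compression $T = P_\h S|_\h$ takes exactly the form \eqref{eq24} on $\n_0 \oplus \ka_1$, with the bottom-right $U$ inherited verbatim from the Brownian part. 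Condition \eqref{eq25} is then extracted from $V^* V = I$ and $V^* E_0 = 0$ by polar-decomposing the $\ka^\perp$-valued components of $V$ and $E_0$, mirroring the argument used for (i) $\Rightarrow$ (iii) of Theorem~\ref{te21}; minimality is arranged by restricting to $\bigvee_{n \ge 0} S^n \h$. The main obstacle I anticipate is the unitarity of the bottom-right block in step (ii) $\Rightarrow$ (iii): the $P$-isometry condition $T^* P T = P$ on its own only yields isometric behaviour of the compression of $T$ to $\R(P)$, and the surjectivity must be teased out of the additional rigidity imposed by requiring $\delta^{-2} \Delta_{S_{A,T}}$ to fit inside a Brownian block structure.
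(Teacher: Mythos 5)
Your (i) $\Rightarrow$ (ii) step is exactly the computation the paper makes in the paragraph preceding the corollary, and your (iii) $\Rightarrow$ (i) step is essentially the paper's intended argument (type I forces $\overline{\R(\Delta_S)}\subset \h$, so the compression of $S$ to $\h=\bigl(\n(\Delta_S)\ominus \h^{\perp}\bigr)\oplus \overline{\R(\Delta_S)}$ is upper triangular with the unitary $U$ in the corner, and \eqref{eq25} comes out of $V^*V=I$, $V^*E_0=0$ by polar decomposition). One small point there: to write the $\h^{\perp}$-component of $E_0$ as $J_1D_E$ with $J_1$ isometric you need $E'^{*}E'=D_E^2$, i.e.\ $E_0$ isometric, and this is where the hypothesis that $\delta^{-2}\Delta_S$ is an orthogonal projection (combined with $U$ unitary) must be invoked; the Brownian condition alone only gives an injective contraction $E_0$.

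The genuine gap is (ii) $\Rightarrow$ (iii), precisely at the point you flag and leave open. For $A=\delta^2P$ the operator $S_{A,T}$ of \eqref{eq28} is indeed a minimal type I $2$-isometric lifting with $\Delta_{S_{A,T}}=0\oplus \delta^2P$, but in its canonical representation \eqref{eq12} the lower right block is $Y=P_{\R(P)}T|_{\R(P)}$, and the hypotheses of (ii) only make $Y$ an isometry; there is no hidden rigidity that upgrades it to a unitary. Concretely, take $\h=\C\oplus \ell_+^2$, $T(a\oplus x)=\delta\langle x,e_0\rangle \oplus S_+x$ with $S_+$ the unilateral shift, and $P=0\oplus I$: then $T^*PT=P$ and $\delta^2P\ge \Delta_T$, yet $P_{\R(P)}T|_{\R(P)}=S_+$ is not unitary, so this $S_{A,T}$ is not a Brownian isometry in the paper's sense; moreover any operator of the form \eqref{eq24} has $\m_1$ co-invariant with $T^*|_{\m_1}=U^*$ unitary, whereas here $T^{*n}\to 0$ strongly, so the difficulty cannot be repaired by passing to a different lifting -- the unitary corner is extra information carried by (i)/(iii) and is not extractable from the $P$-isometry identity together with $\delta^2P\ge \Delta_T$ along the lines you propose. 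The paper's own proof is organized the other way around: it produces the lifting of (iii) from statement (i), where the unitary $U$ is part of the given data, by running the $\widehat{S}$-construction from the proof of (iii) $\Rightarrow$ (i) of Theorem~\ref{te21} with $\widetilde{T}=T$ (there $\widehat{S}$ is explicitly noted to be a Brownian isometry with $\delta^{-2}\Delta_{\widehat{S}}$ an orthogonal projection, minimal and of type I), and it obtains the $P$-isometry statement (ii) from (i) by the displayed matrix computation; it never manufactures the unitary corner out of condition (ii). So to close the cycle through (ii) you need a genuinely new argument (or an additional hypothesis guaranteeing that the isometry $P_{\R(P)}T|_{\R(P)}$ is onto), not a refinement of the $S_{A,T}$ construction.
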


An application of Theorem \ref{te21} concerns the $(A,2)$-{\it expansive} operators studied in  \cite{JKKL}, which in fact are the $\Delta_A(T)$-contractions, where $\Delta_A(T)=T^*AT-A$ and $A$ is a positive operator. We obtain the following consequence.

\begin{corollary}\label{co230}
Let $T\in \B$ be an invertible $(A,2)$-expansive operator such that $A\ge \Delta_T$ or $A\ge \Delta_{T^{-1}}$. Then $T$, respectively $T^{-1}$, have minimal 2-isometric liftings of type I.
\end{corollary}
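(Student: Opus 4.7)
The plan is to show that the two hypotheses actually force $T$ (respectively $T^{-1}$) to be an honest $A$-isometry, after which Theorem~\ref{te21} applies directly.

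The main step is the lemma: \emph{if $T\in\B$ is invertible and $(A,2)$-expansive, then $T^*AT=A$}. Writing $B:=T^*AT-A\ge 0$, the $(A,2)$-expansivity condition becomes $T^*BT\le B$. Substituting $x\mapsto T^{-n}y$ in that operator inequality shows that the sequence $C_n:=T^{-*n}BT^{-n}$ is nondecreasing in $n$, with $C_0=B$. On the other hand, a telescoping identity gives
\begin{equation*}
\sum_{n=1}^{N} C_n \;=\; \sum_{n=1}^{N}\bigl(T^{-*(n-1)}AT^{-(n-1)}-T^{-*n}AT^{-n}\bigr)\;=\;A-T^{-*N}AT^{-N}\;\le\;A.
\end{equation*}
Hence for every $x\in\h$ the scalar sequence $\langle C_nx,x\rangle$ is nonnegative, nondecreasing, and summable; since a nondecreasing nonnegative sequence with convergent sum must vanish identically (otherwise its partial sums would diverge), one obtains $C_1=T^{-*}BT^{-1}=0$, and invertibility of $T$ then yields $B=0$, i.e.\ $T^*AT=A$.

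With the lemma in hand, the corollary is a direct application of Theorem~\ref{te21}(ii)$\Rightarrow$(i). In the first case ($A\ge\Delta_T$), $T$ is an $A$-isometry with $A\ge\Delta_T$, so that theorem furnishes a minimal 2-isometric lifting of type I for $T$. In the second case ($A\ge\Delta_{T^{-1}}$), the identity $T^*AT=A$ is equivalent to $T^{-*}AT^{-1}=A$, so $T^{-1}$ is also an $A$-isometry, and applying the same theorem to $T^{-1}$ with $A\ge\Delta_{T^{-1}}$ produces the desired lifting for $T^{-1}$. In the degenerate sub-case $A=0$, the hypothesis simply says that $T$ (resp.\ $T^{-1}$) is a contraction, whose minimal isometric lifting is already a type I 2-isometric lifting of covariance zero.

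The principal difficulty is the monotonicity/summability argument collapsing $C_n$ to $0$: the telescoping identity is elementary, but the key insight is that invertibility of $T$ lets us iterate \emph{backwards} (conjugating by $T^{-1}$), producing a nondecreasing bounded sequence of positive operators whose only possibility is to vanish.
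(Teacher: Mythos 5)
Your proposal is correct, and at the level of this corollary it follows the same route as the paper: both reduce the statement to the identity $T^*AT=A$ (equivalently $T^{-*}AT^{-1}=A$) and then invoke Theorem~\ref{te21}, (ii)$\Rightarrow$(i), applied to $T$ when $A\ge\Delta_T$ and to $T^{-1}$ when $A\ge\Delta_{T^{-1}}$. The difference is that the paper simply cites \cite[Theorem 3.10 (ii)]{JKKL} for the fact that an invertible $(A,2)$-expansive operator satisfies $T^*AT=A$, whereas you supply a self-contained proof: the backward conjugation making $C_n=T^{-*n}BT^{-n}$ nondecreasing, the telescoping bound $\sum_{n=1}^N C_n=A-T^{-*N}AT^{-N}\le A$, and the observation that a nonnegative nondecreasing summable scalar sequence vanishes, forcing $C_1=0$ and hence $B=0$ by invertibility — this is sound and arguably more transparent than the citation. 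The only point you gloss over is the inequality $B=T^*AT-A\ge 0$, which is not literally part of the $(A,2)$-expansivity inequality $T^*BT\le B$; it does hold (for each $x$ the nonnegative sequence $\langle AT^nx,T^nx\rangle$ is concave, hence nondecreasing), and it is implicit in the paper's description of $(A,2)$-expansive operators as $\Delta_A(T)$-contractions, but a one-line justification would make your lemma fully self-contained. Your treatment of the degenerate case $A=0$ (where $T$, resp.\ $T^{-1}$, is a contraction and its minimal isometric lifting is a type I $2$-isometric lifting of covariance zero) is consistent with the dichotomy in Theorem~\ref{te21}(ii).
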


\begin{proof}
From hypothesis and \cite[Theorem 3.10 (ii)]{JKKL}, it follows that $T^*AT=A$, or, equivalently, $(T^*)^{-1}AT^{-1}=A$. Thus one can apply Theorem \ref{te21} to $T$ if $A\ge \Delta_T$, respectively to $T^{-1}$ if $A\ge \Delta_{T^{-1}}$, to obtain the conclusion.
\end{proof}

\begin{remark}\label{re23}
\rm
Theorem \ref{te21} can be seen as a generalization of the well-known theorem of isometric lifting of a contraction (see \cite{FF, SFb}). The operator $S_{A,T}$ in \eqref{eq28} is a minimal 2-isometric lifting of $T$ on the space $\ka= \ell_+^2(\overline{\R(A_T)})\oplus \h$, while in the case when $T$ is a contraction (corresponding to $A=0$ in (ii)), the operator $S_{0,T}$ is the minimal isometric lifting of $T$. But the 2-isometric lifting $S_{A,T}$ with $A \neq 0$ is not uniquely determined by the minimality condition, up to unitary equivalence which fixes $\h$, as we will see in the next section. This happens even for contractions $T$ with $\widehat{A}:=s-\lim_{n\to \infty} T^{*n}T^n\neq 0$ when we consider the corresponding lifting $S_{\widehat{A},T}$. In this case $T^*\widehat{A}T=\widehat{A} \ge \Delta_T=-D_T^2$. Then $\widehat{A}_T=\widehat{A}+D_T^2$ and $S_{\widehat{A},T}$ is a minimal 2-isometric lifting of $T$ of covariance $\|\widehat{A}\|^{1/2}>0$, so $S_{\widehat{A},T}$ is not isometric. Hence $S_{\widehat{A},T}$ cannot be unitarily equivalent to $S_{0,T}$.
\end{remark}

\begin{remark}\label{re24}
\rm

The condition \eqref{eq25} does not imply the condition $C^*E=0$, in general, but the converse implication holds by choosing $J_0, J_1$ such that the subspaces $J_0\D_C$ and $J_1\D_E$ are orthogonal in $\m'$.
Therefore, the operators $T$ of the form \eqref{eq24} with the condition $C^*E=0$ form a special class of operators with Brownian isometric liftings of type I having the covariance operators the scalar multiples of orthogonal projections (by Corollary \ref{co22}). We mention here two special cases. If $C$ is a coisometry in \eqref{eq24}, then \eqref{eq25} implies $E=0$, hence the operator $C \oplus U$ is a contraction. Also, if $C$ or $E$ are isometries, then the two conditions \eqref{eq25} and $C^*E=0$ are simultaneously satisfied.
\end{remark}

\subsection{Expansive operators}

The case when $C$, the upper left entry of the matrix \eqref{eq24}, is an isometry is related to expansive operators.  Recall that $T$ is said to be expansive if $T^*T \ge I$.

\begin{theorem}\label{te25}
For $T\in \B$ the following statements are equivalent :
\begin{itemize}
\item[(i)] $T$ is expansive and has a minimal 2-isometric lifting of type I;

\item[(ii)] $T$ is an $A$-contraction for some $A\in \B$ such that $A\ge \Delta_T\ge 0$;

\item[(iii)] $T$ has an extension $\widetilde{T}$, defined on a Hilbert space $\ka$, which on an orthogonal decomposition $\ka= \ka_0 \oplus \ka _1$ has the form
 \begin{equation}\label{eq29}
 \widetilde{T}=
 \begin{pmatrix}
 V & \delta E\\
 0 & U
 \end{pmatrix},
 \end{equation}
 with $V$ an isometry on $\ka_0$, $U$ a unitary operator on $\ka_1$, $E$ a contraction from $\ka_1$ into $\ka _0$ satisfying $V^*E=0$, and $\delta \ge \|\Delta_T\|^{1/2}$.
\end{itemize}

Moreover, if these statements are true, then one can chose $\widetilde{T}$ in \eqref{eq29} with $\delta =\|A\|^{1/2}$ for $A$ as in (ii).
\end{theorem}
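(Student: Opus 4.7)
The plan is to close the cycle $(iii)\Rightarrow (i)$, $(iii)\Rightarrow (ii)$, $(i)\Rightarrow (ii)$, and $(ii)\Rightarrow (iii)$, with the last being the substantive step.

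The implications out of $(iii)$ are short specializations. I would first note that \eqref{eq29} is exactly the form \eqref{eq24} of Theorem~\ref{te21}(iii) with $C=V$ an isometry; then $D_C=0$, so the compatibility relation \eqref{eq25} reduces to $V^*E=0$, and Theorem~\ref{te21} immediately yields a minimal 2-isometric lifting of type I. For $(iii)\Rightarrow (ii)$, a block computation using $V^*V=I$, $U^*U=I$ and $V^*E=0$ shows that $\widetilde T^*(\delta^2 P_{\ka_1})\widetilde T=\delta^2 P_{\ka_1}$, so the compression $A:=\delta^2 J^*P_{\ka_1}J$ to $\h$ satisfies $T^*AT=A$. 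Writing $h=h_0+h_1\in\h$ with $h_i\in\ka_i$, the identity $\langle \Delta_T h,h\rangle=\delta^2\|Eh_1\|^2$ gives simultaneously $\Delta_T\ge 0$ (so $T$ is expansive) and $\Delta_T\le\delta^2 P_{\ka_1}|_\h=A$.

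For $(i)\Rightarrow (ii)$, Theorem~\ref{te21}(ii) offers two alternatives: either $T$ is an $A$-isometry for some nonzero $A\ge\Delta_T$ (in which case it is a fortiori an $A$-contraction and (ii) holds), or $T$ is a strongly stable contraction; combined with the expansiveness $T^*T\ge I$ the latter forces $T$ to be an isometry with $\|T^n h\|\to 0$, hence $\h=\{0\}$ and $A=0$ works trivially.

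The main construction is $(ii)\Rightarrow (iii)$. Given $T^*AT\le A$ with $A\ge\Delta_T\ge 0$ and $\delta:=\|A\|^{1/2}$ (so that $A\le\delta^2 I$), I would aim at the isometric embedding
\[
Jh:=\bigl((I-\delta^{-2}A)^{1/2}h,\;\delta^{-1}A^{1/2}h\bigr)\in\ka_0\oplus\ka_1,
\]
which is isometric because $(I-\delta^{-2}A)+\delta^{-2}A=I$. The required intertwining $\widetilde TJ=JT$ splits into $UJ_1=J_1T$, equivalent to asking $U$ to be a unitary on $\ka_1\supset J_1\h$ whose action on $J_1\h$ implements $A^{1/2}h\mapsto A^{1/2}Th$, together with the compatibility $J_0T=VJ_0+\delta EJ_1$ on $\ka_0$, subject to $V$ being isometric and $V^*E=0$. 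When $T$ is only $A$-contractive, the induced action on $\overline{\R(A^{1/2})}$ is merely contractive, so a plain Sz.-Nagy dilation of the contraction $T_A$ gives only a power dilation, not an extension of $T$. Here the Treil--Volberg generalization of the commutant lifting theorem (flagged in the Introduction) is the natural tool to produce $U$ as a unitary on an enlarged $\ka_1$ that makes $\widetilde T$ a genuine extension of $T$ while respecting the inequality $A\ge\Delta_T$. The space $\ka_0$ can then be realized as $\ell^2_+(\overline{\R(D)})$ for the $A$-defect $D:=(A-T^*AT)^{1/2}$, with $V$ the forward shift, and $E:\ka_1\to\ka_0$ is produced by a Julia-type completion so that $V^*E=0$ and the compatibility on $\ka_0$ is satisfied; the calibration $J_1=\delta^{-1}A^{1/2}$ delivers the moreover statement $\delta=\|A\|^{1/2}$.

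The principal obstacle is precisely this passage from contractive to unitary extension in $(ii)\Rightarrow(iii)$: without a weighted commutant lifting one is stuck with a power dilation of $T_A$, which does not yield an extension of $T$ of the required form; the role of the extra hypothesis $A\ge\Delta_T$ is to make the upgrade possible while keeping $\widetilde T$ in the block-triangular Brownian-isometric shape.
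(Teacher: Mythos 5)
Your treatment of the peripheral implications is sound and matches the paper's: \eqref{eq29} is indeed \eqref{eq24} with $C=V$ an isometry, so $D_C=0$ and \eqref{eq25} collapses to $V^*E=0$, whence Theorem~\ref{te21} gives the minimal type~I lifting; the identity $\langle \Delta_T h,h\rangle=\delta^2\|EP_{\ka_1}h\|^2$ gives expansivity and $\Delta_T\le \delta^2 P_{\h}P_{\ka_1}|_{\h}$, and the compression argument gives the $A$-isometry relation (this is exactly how the paper proves (iii)$\Rightarrow$(i)); and (i)$\Rightarrow$(ii) via Theorem~\ref{te21} is the paper's one-liner. The genuine gap is in your main step (ii)$\Rightarrow$(iii), which is only gestured at. First, as you half-acknowledge, no unitary $U$ on \emph{any} enlargement of $\ka_1$ can implement $A^{1/2}h\mapsto A^{1/2}Th$ on $J_1\h$ when $T$ is a strict $A$-contraction, because that map strictly decreases some norms; enlarging the space does not help. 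What the Treil--Volberg theorem actually supplies (and how the paper uses it, in its proof of (ii)$\Rightarrow$(i)) is a \emph{new weight} $B=\widehat{A}^*\widehat{A}\ge A\ge\Delta_T$ with $T^*BT=B$ and $\|B\|=\|A\|$; the embedding must then be rebuilt from $B$, i.e.\ $J_1=\|B\|^{-1/2}B^{1/2}$, after which $U$ is the minimal unitary extension of the isometry $W$ with $WB^{1/2}=B^{1/2}T$. Your text keeps the calibration $J_1=\delta^{-1}A^{1/2}$, which cannot work as stated.

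Second, and more seriously, the $\ka_0$-side of the construction is unsubstantiated. With your embedding, $J_0=(I-\delta^{-2}B)^{1/2}$ has range inside $\h$, so $\ka_0$ must contain a copy of $\h$; declaring $\ka_0=\ell_+^2(\overline{\R(D)})$ with $V$ the forward shift and $E$ obtained by a ``Julia-type completion'' imports the \emph{lifting} construction \eqref{eq28} of Theorem~\ref{te21} (where $\h$ is co-invariant) into a situation where $\h$ must be \emph{invariant}, and no argument is offered that the compatibility $J_0T=VJ_0+\delta EJ_1$ with $V$ isometric and $V^*E=0$ can be met in that frame. The missing device is the paper's Douglas-lemma step: from $T^*T\ge I$ and $T^*B_0T=B_0$ (with $B_0=\|B\|^{-1}B$) one gets $T^*(I-B_0)T\ge I-B_0$, hence a contraction $C$ with $(I-B_0)^{1/2}=T^*(I-B_0)^{1/2}C$; taking $V$ to be the minimal isometric lifting of $C$ on $\ka_0\supset\h$ and defining $E_0B_0^{1/2}h=(I-VV^*)(I-B_0)^{1/2}Th$ makes the compatibility hold with $V^*E_0=0$ automatic, while the hypothesis $B\ge\Delta_T$ is precisely what yields $\|E_0B_0^{1/2}h\|^2=\langle\Delta_Th,h\rangle\le\langle Bh,h\rangle$, so that $\delta^{-1}E_0$ extends to a contraction and the ``moreover'' value $\delta=\|A\|^{1/2}=\|B\|^{1/2}$ is preserved. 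Without this (or an equivalent mechanism producing $V$, $E$ and the compatibility), the implication (ii)$\Rightarrow$(iii) remains unproved.
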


\begin{proof}
We may assume that $T$ is non-isometric, i.e. $\Delta_T\neq 0$.
It is clear that (i) implies (ii) by Theorem \ref{te21}. Conversely, let us assume that $T^*AT\le A$ with $A \ge \Delta _T \ge 0$, as in (ii). Then there exists a contraction $\widehat{T}$ on $\overline{\R(A)}$ such that $\widehat{T}A^{1/2}h=A^{1/2}Th$ for $h\in \h$. In fact, defining the operator $A_0: \h \to \overline{\R(A)}$ by $A_0h=A^{1/2}h$, $h\in \h$, we have $A_0T=\widehat{T}A_0$.

Let $\widehat{V}$ on $\widehat{\h} \supset \overline{\R(A)}$ be the minimal isometric lifting of $\widehat{T}$. Then, as $T^*T\ge I$, by the Treil-Volberg generalization of the commutant lifting theorem (see  \cite{BFF}, \cite{FFK2}, \cite{TV}), there exists an operator $\widehat{A}\in \mathcal{B}(\h, \widehat{\h})$ with $\|\widehat{A}\|=\|A_0\|$ such that
$$
P_{\widehat{\h}, \overline{\R(A)}}\widehat{A}=A_0, \quad \widehat{A}T=\widehat{V}\widehat{A}.
$$
Defining $B\in \B$ by $B=\widehat{A}^*\widehat{A}$, we have $T^*BT=B$, i.e. $T$ is a $B$-isometry with $\|B\|=\|A\|$ and $B \ge A_0^*A_0=A \ge \Delta_T\ge 0$.
Hence (ii) implies (i) by Theorem \ref{te21}.

To show that (i) implies (iii) we can suppose (by Theorem \ref{te21}, (i)) that $T$ is a $B$-isometry with $B\ge \Delta_T \ge 0$. Then there exists an isometry $W$ on $\overline{\R(B)}$ satisfying the relation $WB^{1/2}=B^{1/2}T$.

Let us denote $B_0=\|B\|^{-1}B$. Since $T$ is expansive and a $B_0$-isometry, we have
$$
T^*(I-B_0)T=T^*T-B_0\ge I-B_0,
$$
and then, by Douglas' lemma  \cite{D}, one obtains a contraction $C$ on $\h$ satisfying the relation
$$
(I-B_0)^{1/2}=T^*(I-B_0)^{1/2}C.
$$
This, together with the relation $B\ge \Delta_T$, yield the inequality
$$
T^*(I-B_0)^{1/2}(I-CC^*)(I-B_0)^{1/2}T=T^*T-I \le B= \|B\|B_0.
$$

Consider $V$ on $\ka_0\supset \h$ to be the minimal isometric lifting of $C$. Therefore $P_{\ka_0, \h}V=CP_{\ka_0,\h}$. Define the linear operator $E_0: \R(B^{1/2})\to \ka_0$ by the relation
$$
E_0(B_0^{1/2}h)=(I-VV^*)(I-B_0)^{1/2}Th, \quad h\in \h.
$$
Since $V^*|_{\h}=C^*$, from the above relations one obtains
\begin{eqnarray*}
\|E_0B_0^{1/2}h\|^2 &=& \langle (I-VV^*)(I-B_0)^{1/2}Th, (I-B_0)^{1/2}Th \rangle\\
&=& \langle (I-CC^*)(I-B_0)^{1/2}Th, (I-B_0)^{1/2}Th\rangle \\
&=& \langle \Delta_Th,h \rangle \le  \langle Bh,h\rangle.
\end{eqnarray*}
Setting $E_1:= \delta^{-1} E_0$, where $\delta= \|B\|^{1/2}\ge \|\Delta_T\|^{1/2}$, the previous inequality becomes
$$
\|E_1B^{1/2}h\| \le \|B^{1/2}h\|, \quad h\in \h .
$$
So $E_1$ can be continuously extended to a contraction, also denoted $E_1$, from $\overline{\R(B)}$ into $\ka_0$. In addition, by the definition of $E_0$, one has $V^*E_1=0$.

Let now $U$ be the minimal unitary extension of the above isometry $W$ on 
$$\ka_1 =\bigvee _{n\ge 0} U^{*n}\overline{\R(B)},$$ and let $E:\ka_1 \to \ka_0$ be a contractive extension of $E_1$ to $\ka_1$, for example $E=E_1P$ where $P=P_{\ka_1, \overline{\R(B)}}$ is the projection of $\ka_1$ onto $\overline{\R(B)}$. Clearly, one has $V^*E=0$.

Consider the Hilbert space $\ka=\ka_0 \oplus \ka_1$ and the operator $\widetilde{T}\in \mathcal{B}(\ka)$ having the block matrix \eqref{eq29} with the above operators $V,U,E$ and number $\delta$. Define now the operator $Z:\h \to \ka$ with $\R(Z)\subset \ka_0 \oplus \overline{\R(B)}$, by the relation
$$
Zh=(I-B_0)^{1/2}h\oplus B_0^{1/2}h, \quad h\in \h.
$$
Obviously, $Z$ is an isometry. We now show that $ZT=\widetilde{T}Z$. We have $U|_{\overline{\R(B)}}=W$ and $E|_{\R(B)}=E_1$, so
$$
ZTh=(I-B_0)^{1/2}Th\oplus B_0^{1/2}Th=(I-B_0)^{1/2}Th \oplus UB_0^{1/2}h.
$$
On the other hand, using \eqref{eq29}, we have
$$
\widetilde{T}Zh=[V(I-B_0)^{1/2}h \oplus E_0B_0^{1/2}h]\oplus UB_0^{1/2}h.
$$
But $V^*|_{\h}=C^*$, and from the definitions of $E_0$ and $C$ we get
\begin{eqnarray*}
(I-B_0)^{1/2}Th&=& VV^*(I-B_0)^{1/2}Th \oplus (I-VV^*)(I-B_0)^{1/2}Th\\
&=& VC^*(I-B_0)^{1/2}Th\oplus E_0B_0^{1/2}h=V(I-B_0)^{1/2}h \oplus E_0B_0^{1/2}h.
\end{eqnarray*}
Hence $ZT=\widetilde{T}Z$, which means that $T$ is unitarily equivalent to the operator $T'=\widetilde{T}|_{\R(Z)}$ and $\widetilde{T}$ is an extension of $T'$ of the form \eqref{eq29}.

Identifying $T$ with $T'$, it follows that $T$ has the property (iii), and we conclude that (i) implies (iii). In addition, we note that $\delta= \|B\|^{1/2}$. We have seen that such an operator $B$ can be induced by an operator $A$ satisfying (ii) with $\|B\|=\|A\|$, so we can choose $\delta=\|A\|^{1/2}\ge \|\Delta_T\|^{1/2}$ (as $\Delta_T\ge 0$) in this case.

To complete the proof we show that (iii) implies (i). Indeed, let us assume that $\widetilde{T}$ on $\m=\m_0 \oplus \m_1\supset \h$ is an extension of $T$, $\widetilde{T}$ having the form \eqref{eq29} with $V^*E=0$. Then $\h$, as a subspace of $\m$, is invariant for $\widetilde{T}$. But $\widetilde{T}$ is expansive because $\Delta_{\widetilde{T}}=0\oplus \delta^2 E^*E\ge 0$, and we infer that $\Delta_T= P_{\h}\Delta_{\widetilde{T}}|_{\h}\ge 0$. Thus $T$ is also expansive. Furthermore, as we have seen before in Corollary \ref{co22}, $\widetilde{T}$ is a $P_{\m_1}$-isometry and $\delta ^2P_{\m_1}\ge \Delta _{\widetilde{T}}$. Now $P_{\m_1}\h\neq \{0\}$, because otherwise one has $\h\subset \m_0$, so $T=\widetilde{T}|_{\h}=V|_{\h}$. Hence $\h$ is invariant for $V$ and $T$ is an isometry, in contradiction with our assumption from the beginning of the proof. Therefore $P_{\m_1}|_{\h}\neq 0$, which implies
$$
P_{\h}P_{\m_1}|_{\h}\ge \delta^{-2} P_{\h}\Delta_{\widetilde{T}}|_{\h}=\delta^{-2}\Delta_T.
$$
Thus $P_{\h}P_{\m_1}|_{\h} \neq 0$ (as $\Delta_T \neq 0$). Finally, from the relation $\widetilde{T}^*P_{\m_1}\widetilde{T}=P_{\m_1}$, we deduce that $T^*A_1T=A_1$, where $A_1= P_{\h} P_{\m_1}|_{\h}$. Hence $T$ is a $\delta ^2A_1$-isometry with $\delta^2A_1\ge \Delta_T$. Therefore, $T$ satisfies (by Theorem \ref{te21}) the requirements of (i). This proves that (iii) implies (i). The proof is  complete.
\end{proof}

Let us note that in the proof of implication (i) $\Rightarrow$ (iii) we have used an argument inspired from \cite[Theorem 5.80]{AS2}, which concerns $2$-isometries. In the case when $T$ is a 2-isometry one can choose in the above proof $B=\Delta_T$. This leads to the isometry $E_1$ and one can consider $E$ an isometric extension of $E_1$ with $\R(E)=\n(V^*)$. Then, in this case, $\widetilde{T}$ is a Brownian unitary extension of $T$ with ${\rm cov}(\widetilde{T})=\delta={\rm cov}(T)$, and so one recovers the result of \cite{AS2}.

Concerning the operator $\widetilde{T}$ in \eqref{eq29}, we remark that we do not assume any relationship between the operators $E$ and $U$. However, the operator $\widetilde{T}$ satisfy the equivalent conditions of Theorem \ref{te25} and they can be described by the special Brownian isometric liftings from Corollary \ref{co22}, as follows.

\begin{proposition}\label{pr28}
For $T$ on $\h$ the following statements are equivalent:
\begin{itemize}
\item[(i)] $T$ has a block matrix decomposition \eqref{eq29} on $\h=\h_0\oplus \h_1$, with $\h_0$ an invariant subspace for $T^*T$;
\item[(ii)] $T$ is an expansive $P$-isometry for an orthogonal projection $P$ with $\delta^2P\ge \Delta_T$ and some scalar $\delta>0$;
\item[(iii)] $T$ has a (minimal) Brownian isometric lifting $S$ of type I, on a space $\ka=\h^{\perp} \oplus \h$ such that $\n(\Delta_S)\ominus \h^{\perp}$ is an invariant subspace for $S$, $\delta^{-2}\Delta_S$ is an orthogonal projection and $\delta={\rm cov}(S)$.
\end{itemize}
\end{proposition}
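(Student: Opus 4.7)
The plan is to prove the cycle (i) $\Rightarrow$ (ii) $\Rightarrow$ (iii) $\Rightarrow$ (i), using Corollary \ref{co22} as a black box for the transition between the $P$-isometry condition and the existence of a Brownian isometric lifting, and handling the extra invariance requirement by a direct analysis of the type-I block form of such a lifting.

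For (i) $\Rightarrow$ (ii), I would start from the block form \eqref{eq29} and compute
$$T^*T = I_{\h_0} \oplus (\delta^2 E^*E + I_{\h_1}), \qquad \Delta_T = 0 \oplus \delta^2 E^*E,$$
from which $T$ is expansive, $\h_0$ reduces $T^*T$, and $T^* P_{\h_1} T = 0 \oplus U^*U = P_{\h_1}$ (using $U$ unitary). The estimate $\delta^2 P_{\h_1} - \Delta_T = 0 \oplus \delta^2(I - E^*E) \ge 0$ follows from $E$ being a contraction, so (ii) holds with $P = P_{\h_1}$ and the given $\delta$.

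For (ii) $\Rightarrow$ (iii), I would invoke Corollary \ref{co22} with the same $P$ and $\delta$ to produce a minimal Brownian isometric lifting $S$ of type I on $\ka = \h^\perp \oplus \h$ such that $\delta^{-2}\Delta_S$ is an orthogonal projection and ${\rm cov}(S)=\delta$. The new requirement, namely that $\h_0 := \n(\Delta_S) \ominus \h^{\perp}$ is $S$-invariant, comes from the expansiveness of $T$: writing $S = \begin{pmatrix} W & X \\ 0 & T\end{pmatrix}$ in type-I form, the projection hypothesis forces $\Delta_S = 0 \oplus \delta^2 Q$ with $Q$ an orthogonal projection on $\h$, so $\h_0 = \n(Q)$; for $h_0 \in \h_0$ one has $\Delta_T h_0 = -X^*X h_0$, hence $\Delta_T \ge 0$ forces $X|_{\h_0} = 0$, and together with $\h_0$ invariant for $T$ (obtained from $T^*QT = Q$) this yields $S h_0 = 0 \oplus Th_0 \in \h_0$, as required.

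For (iii) $\Rightarrow$ (i), set $\h_0 := \n(\Delta_S) \ominus \h^{\perp}$ and $\h_1 := \h \ominus \h_0$. Unpacking the invariance of $\h_0$ through the block form $S = \begin{pmatrix} W & X \\ 0 & T\end{pmatrix}$ forces $X|_{\h_0} = 0$ (otherwise $S h_0$ would have a nonzero $\h^{\perp}$-component), and since $\h_0 \subset \n(\Delta_S)$ the restriction $S|_{\h_0}$ is isometric, giving $\|T h_0\| = \|S h_0\| = \|h_0\|$ and hence $V := T|_{\h_0}$ is an isometry. The Brownian isometric canonical decomposition of $S$ on $\n(\Delta_S) \oplus \overline{\R(\Delta_S)}$ supplies the remaining ingredients: the upper-right intertwiner satisfies $V^*E = 0$ after projecting to $\h$, and the lower-right block is unitary and identifies with $U := P_{\h_1} T|_{\h_1}$. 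Assembling these pieces yields $T = \begin{pmatrix} V & \delta E \\ 0 & U \end{pmatrix}$ as in \eqref{eq29}, with $\h_0$ automatically invariant for $T^*T$ thanks to block diagonality. The most delicate point is the interplay between the extra invariance in (iii) and expansiveness in (ii): expansiveness together with the type-I plus projection structure is exactly what forces $X|_{\h_0}$ to vanish, and this in turn upgrades the upper-left block of $T$ from a mere contraction (as in Corollary \ref{co22}) to the isometry $V$ demanded by \eqref{eq29}.
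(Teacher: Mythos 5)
Your proposal is correct, but it is organized differently from the paper and replaces the paper's main construction by an abstract argument, so it is worth comparing the two. The paper proves (i)$\Leftrightarrow$(ii) through Corollary \ref{co22} (for (ii)$\Rightarrow$(i) observing that expansiveness upgrades the contraction $C$ in \eqref{eq24} to an isometry, so that \eqref{eq25} collapses to $C^*E=0$), and proves (i)$\Rightarrow$(iii) by explicitly building the lifting as a $3\times 3$ block matrix on a space of the form $\ell_+^2(\cdot)\oplus\h_0\oplus\h_1$ (the $\widehat{S}$-construction from the proof of Theorem \ref{te21}), from which minimality, $\delta^{-2}\Delta_S=P_{\h_1}$ and the $S$-invariance of $\n(\Delta_S)\ominus\h^{\perp}=\h_0$ are all read off the matrix. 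You instead run the cycle (i)$\Rightarrow$(ii)$\Rightarrow$(iii)$\Rightarrow$(i): your (i)$\Rightarrow$(ii) is a direct two-line block computation, and in (ii)$\Rightarrow$(iii) you take the lifting of Corollary \ref{co22} as a black box and then prove that, for expansive $T$, the extra invariance of $\h_0=\n(Q)$ is automatic in any type-I lifting with $\Delta_S=0\oplus\delta^2Q$: positivity of $\Delta_T$ and of $X^*X$ forces $X|_{\h_0}=0$ from $X^*Xh_0+\Delta_Th_0=0$, while $T^*QT=Q$ (which is exactly \eqref{eq27} with $\Delta_W=0$ and $W^*X=0$, and which you should state is obtained this way) together with $Q\ge 0$ gives $T\n(Q)\subset\n(Q)$; in fact your argument only uses $A:=X^*X+\Delta_T\ge 0$ and $T^*AT=A$, not that $Q$ is a projection. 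What each approach buys: the paper's explicit model produces the minimal lifting constructively and makes all the properties in (iii) visible at once, whereas your a posteriori argument shows that the invariance required in (iii) is an intrinsic consequence of expansiveness rather than a feature of one particular construction. Your (iii)$\Rightarrow$(i) coincides in substance with the paper's: read $T$ off the canonical Brownian-isometric form of $S$ on $\n(\Delta_S)\oplus\overline{\R(\Delta_S)}$, using $S\h_0\subset\h_0$ (plus $\h_0\subset\n(\Delta_S)$) to make $T|_{\h_0}$ an isometry into $\h_0$ and to kill the lower-left block, and $\widetilde{V}^*\widetilde{E}=0$ to get $V^*E=0$; only the degenerate case ${\rm cov}(S)=0$ (where $T$ is an isometry and \eqref{eq29} holds trivially with $\h_1=\{0\}$) deserves a word, as in the paper.
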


\begin{proof}
Suppose that $T$, acting on $\h=\h_0\oplus \h_1$, has the form \eqref{eq29} with the block matrix given by the operators $V,E$ and $U$, with $V^*E=0$ and $\delta>0$. Obviously, the condition $V^*E=0$ in \eqref{eq29} means that $\h_0$ reduces $T^*T$. Proceeding as in the proof of Theorem \ref{te21} (the construction of $\widehat{S}$), since $V=T|_{\h_0}$ is an isometry in \eqref{eq29}, we find a lifting $S$ of $T$ on $\ka=\ell_+^2(\widetilde{\D}_E)\oplus \h_0 \oplus \h_1$ with the representations
$$
S=
\begin{pmatrix}
S_+ & 0 & \delta \widetilde{D}_E\\
0 & V & \delta E\\
0 & 0 & U
\end{pmatrix}
=\begin{pmatrix}
\widetilde{V} & \delta \widetilde{E}\\
0 & U
\end{pmatrix}
= \begin{pmatrix}
S_+ & \star\\
0 & T
\end{pmatrix},
$$
where $S_+$ is the forward shift on $\h^{\perp}=\ka \ominus \h$, $\widetilde{V}=S_+\oplus V$ on $\h^{\perp} \oplus \h_0$, $\widetilde{E}= \begin{pmatrix} \widetilde{D}_E & E \end{pmatrix}^{\rm tr}: \h_1 \to \h^{\perp} \oplus \h_0$, while $\widetilde{D}_E=J_ED_E$ with $J_E$ the canonical injection of $\D_E$ into $\h^{\perp}$. Since $\widetilde{V}$ and $\widetilde{E}$ are isometries and $V^*E=0$, it follows that $S$ is a Brownian isometric lifting of type I for $T$, with $\n(\Delta_S)=\h^{\perp} \oplus \h_0$ and $\n(\Delta_S)\ominus \h^{\perp}=\h_0$ an invariant subspace for $S$. Remark also that $\delta ^{-2} \Delta_S =P_{\h_1}$, so $\delta = {\rm cov}(S)$ and, in addition, that $S$ is a minimal lifting of $T$. Hence (i) implies (iii).

Conversely, let us assume that such an operator $S$ on $\ka =\h^{\perp} \oplus \h$ is a Brownian isometric lifting for $T$ with $\h^{\perp} \subset \n(\Delta_S)$ (so $S$ is of type I) and that $\n(\Delta_S) \ominus \h^{\perp}$ is an invariant subspace for $S$. Then the operators $V_0=S|_{\h^{\perp}}$ and $V_1= S|_{\n(\Delta_S)\ominus \h^{\perp}}$ are isometries. Hence the isometry $\widetilde{V}:=S|_{\n(\Delta_S)}$ can be written as the direct sum $\widetilde{V}=V_0\oplus V_1$ on $\h^{\perp} \oplus (\n(\Delta_S)\ominus \h^{\perp})$ (as both subspaces are invariant for $\widetilde{V}$). Then, from the block matrix of $S$ on $\ka =\n(\Delta_S) \oplus \overline{\R(\Delta_S)}$ with the operators $\widetilde{V}, \widetilde{E}, U$ (as above) and $\delta ={\rm cov} (S)$, it follows that $T$ has on $\h=(\n(\Delta_S)\ominus \h^{\perp})\oplus \overline{\R(\Delta_S)}=:\h_0 \oplus \h_1$ the representation
$$
T=P_{\h}S|_{\h}=
\begin{pmatrix}
V_1 & \delta F\\
0 & U
\end{pmatrix},
\quad F=P_{\h_0}\widetilde{E}|_{\h_1}.
$$
Clearly, $\delta={\rm cov} (S)>0$ because $T$ (like $S$) can be assumed non-isometric. Since $\widetilde{V}^*\widetilde{E}=0$ in the block matrix of the 2-isometry $S$, we infer $V_1^*F=0$. Hence $T$ has a representation of the form \eqref{eq29} on $\h=\h_0 \oplus \h_1$, where the subspace $\h_0$ reduces $T^*T$. Therefore (iii) implies (i).

Now, since the operators of the form \eqref{eq29} are expansive and they satisfy \eqref{eq24} with the condition \eqref{eq25}, the implication (i) $\Rightarrow$ (ii) follows from Corollary \ref{co22}. Conversely, if $T$ satisfies the assertion (ii), then $T$ has the form \eqref{eq24} on $\h=\h_0\oplus \h_1$ with the condition \eqref{eq25}, by Corollary \ref{co22}. The operator $T$ being expansive by (ii), it follows that $C=T|_{\h_0}$ is expansive, hence $C$ is an isometry ($C$ being a contraction by \eqref{eq25}). Then the condition \eqref{eq25} reduces to $C^*E=0$, where $E=P_{\h_0}T|_{\h_1}$. We conclude that $T$ has the form \eqref{eq29}. Hence (ii) implies (i).
\end{proof}

\begin{remark}\label{re29}
\rm
The Brownian isometric liftings mentioned in Corollary \ref{co22} and Proposition \ref{pr28} are not Brownian unitaries, because $\n(\widetilde{V}^*)\neq \R(\widetilde{E})$ (in the previous proof), in general. While both satisfy the conditions (i), (ii) and (iii) of \cite[Theorem 5.20]{AS2}, they not satisfy the condition (iv) of  the same result \cite[Theorem 5.20]{AS2} in which Brownian unitaries are characterized.
\end{remark}

\subsection{Concave operators and operators similar to isometries}
In the case of concave operators we can say more.
\begin{theorem}\label{pr210}
For an operator $T\in \B$ the following statements hold:
\begin{itemize}
\item[(i)] $T$ is concave if and only if $T$ has an extension $T_1$ on $\m=\m_0\oplus \m_1 \supset \h$ of the form
\begin{equation}\label{eq210}
T_1=
\begin{pmatrix}
V & \delta E_1\\
0 & W
\end{pmatrix},
\end{equation}
where $V,W$ are isometries, $E_1$ is a contraction with $V^*E_1=0$, $W^*E_1^*E_1W\le E_1^*E_1$, and $\delta =\|\Delta_T\|^{1/2}$. In this case $T_1$ is also concave with $\|\Delta_{T_1}\|^{1/2}=\delta$.

\item[(ii)] If $T$ is concave, and one of the sequences $\{\Delta_T^{1/2}T^n\}$ or $\{\Delta_{T_1}^{1/2}T_1^n\}$ converges strongly to zero, then the other also strongly converges to zero. In this case the sequences $\{\frac{1}{\sqrt{n}}T^n\}$ and $\{\frac{1}{\sqrt{n}}T_1^n\}$ converge strongly to zero.

\item[(iii)] A concave operator $T$ has a minimal $2$-isometric lifting of type I and of covariance $\|\Delta_T\|^{1/2}$.
\end{itemize}
\end{theorem}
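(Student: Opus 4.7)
All three parts rest on the observation that concavity of $T$ is exactly $T^*AT\le A$ with $A=\Delta_T\ge 0$, placing $T$ in the $A$-contraction framework of Theorem~\ref{te25}. Consequently, (iii) is immediate from Theorem~\ref{te25}((ii)$\Rightarrow$(i)) applied with $A=\Delta_T$: the Treil--Volberg step inside that proof produces a positive operator $B$ with $T^*BT=B$, $B\ge\Delta_T$ and $\|B\|=\|\Delta_T\|$, so the corresponding minimal lifting $S_{B,T}$ from Theorem~\ref{te21} has covariance $\|B\|^{1/2}=\|\Delta_T\|^{1/2}$.

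For the forward direction of~(i) --- the main technical obstacle --- the plan is to rerun the construction of the extension $\widetilde{T}$ from the proof of Theorem~\ref{te25}((i)$\Rightarrow$(iii)), stopping short of passing to a unitary extension. With $B$ as above, set $\delta:=\|B\|^{1/2}=\|\Delta_T\|^{1/2}$ and $B_0:=B/\|B\|$. Apply Douglas' lemma (applicable because $T^*(I-B_0)T\ge I-B_0$) to obtain a contraction $C$ on $\h$ with $(I-B_0)^{1/2}=T^*(I-B_0)^{1/2}C$; take $V$ on $\m_0\supset\h$ to be the minimal isometric lifting of $C$; set $\m_1:=\overline{\R(B)}$ and let $W\in\mathcal{B}(\m_1)$ be the isometry defined by $WB^{1/2}=B^{1/2}T$ (isometric because $T^*BT=B$); define $E_1$ on $\R(B_0^{1/2})$ by $E_1(B_0^{1/2}h)=\delta^{-1}(I-VV^*)(I-B_0)^{1/2}Th$. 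The key identity from the proof of Theorem~\ref{te25} gives $\|E_1B^{1/2}h\|^2=\langle\Delta_Th,h\rangle\le\langle Bh,h\rangle$, so $E_1$ extends to a contraction $\m_1\to\m_0$. Assembling $V,W,E_1$ into $T_1$ of the form~\eqref{eq210}, the isometry $Zh=(I-B_0)^{1/2}h\oplus B_0^{1/2}h$ satisfies $ZT=T_1Z$ (the same verification as in Theorem~\ref{te25}), so $T_1$ extends $T$; $V^*E_1=0$ follows from $V^*(I-VV^*)=0$; and $W^*E_1^*E_1W\le E_1^*E_1$ follows from concavity, since on $\R(B^{1/2})$ one has $\|E_1WB^{1/2}h\|^2=\|E_1B^{1/2}Th\|^2=\langle T^*\Delta_TTh,h\rangle\le\langle\Delta_Th,h\rangle=\|E_1B^{1/2}h\|^2$, and this propagates to $\m_1$ by density. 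The obstacle is precisely this design: keeping $W$ an honest isometry while producing the new inequality forces the Treil--Volberg detour (one cannot take $B=\Delta_T$ unless $T$ is already a $2$-isometry) and hinges on the single identity $\|E_1B^{1/2}h\|^2=\langle\Delta_Th,h\rangle$ which links $E_1$, $B$ and the concavity defect.

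The converse in~(i) is a direct calculation: $T_1$ of the stated form has $\Delta_{T_1}=0\oplus\delta^2 E_1^*E_1$ and $T_1^*\Delta_{T_1}T_1=0\oplus\delta^2 W^*E_1^*E_1W$, so $T_1$ is concave by hypothesis; concavity of $T=T_1|_\h$ is inherited via $\Delta_T=P_\h\Delta_{T_1}|_\h$ and $T^*\Delta_TT=P_\h T_1^*\Delta_{T_1}T_1|_\h$ (using that $\h^\perp$ is $T_1^*$-invariant); and the pinching $\|\Delta_T\|^{1/2}\le\|\Delta_{T_1}\|^{1/2}=\delta\|E_1\|\le\delta=\|\Delta_T\|^{1/2}$ yields $\|\Delta_{T_1}\|^{1/2}=\delta$. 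For~(ii), iterating $WB^{1/2}=B^{1/2}T$ gives $\|E_1W^nB^{1/2}h\|^2=\|\Delta_T^{1/2}T^nh\|^2$ for every $h\in\h$, while on $\m$ one computes $\|\Delta_{T_1}^{1/2}T_1^n(m_0\oplus m_1)\|^2=\delta^2\|E_1W^nm_1\|^2$ (the $\m_0$ component drops because $\Delta_{T_1}$ vanishes there); strong convergence of either sequence to zero therefore transfers to the other by density of $\R(B^{1/2})$ in $\m_1$ on one side and via the isometry $Z$ on the other. The final assertion that $\tfrac{1}{\sqrt n}T^n\to 0$ strongly is a standard Cesaro step: $\|\Delta_T^{1/2}T^nx\|^2=\|T^{n+1}x\|^2-\|T^nx\|^2$ is a nonnegative null sequence, so $\frac{1}{n}(\|T^nx\|^2-\|x\|^2)\to 0$, giving $\|T^nx\|^2=o(n)$, and the same argument works verbatim for $T_1$.
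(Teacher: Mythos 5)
Your proposal is correct and follows essentially the same route as the paper: apply the Treil--Volberg construction of Theorem \ref{te25} with $A=\Delta_T$ to get $B$ with $T^*BT=B$, $B\ge\Delta_T$, $\|B\|=\|\Delta_T\|$, build $T_1$ from the Douglas contraction $C$, its minimal isometric lifting $V$, the isometry $W$ on $\overline{\R(B)}$ and the contraction $E_1$ (stopping before the unitary extension of $W$), verify concavity of $T_1$ via the identity $\|E_1B^{1/2}h\|^2=\langle\Delta_Th,h\rangle$, and obtain (iii) from the lifting $S_{B,T}$ of Theorem \ref{te21}. The only deviation is that you write out the Ces\`aro argument for $\frac{1}{\sqrt{n}}T^n\to0$ explicitly, where the paper cites \cite[Theorem 3.10]{JKKL}; this is a harmless (indeed welcome) elaboration, not a different method.
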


\begin{proof}
Assume $T$ concave. By applying Theorem \ref{te25} with $A=\Delta_T$, and using the same notation from its proof, one obtains an extension $T_1$ of $T$ on the space $\m=\ka_0 \oplus \overline{\R(B)}$ of the form \eqref{eq210}. More precisely, $V,W$ are isometries on $\ka_0$, respectively on $\overline{\R(B)}$, $E_1$ is a contraction from $\overline{\R(B)}$ into $\ka_0$ with $V^*E_1=0$, and $\delta=\|\Delta_T\|^{1/2}=\|B\|^{1/2}$. As $T$ is concave, $T_1$ will be concave, too. Indeed, the representation \eqref{eq210} of $T_1$ implies
$$
\Delta_{T_1}=0\oplus \delta^2E_1^*E_1, \quad T_1^*\Delta_{T_1}T_1=0\oplus \delta^2 W^*E_1^*E_1W,
$$
and from the proof of Theorem \ref{te25} we have, for $h\in \h$,
\begin{eqnarray*}
\|E_1WB^{1/2}h\|^2&=& \|E_1B^{1/2}Th\|^2= \langle T^*\Delta_TTh,h\rangle\\
&\le & \langle \Delta_Th,h \rangle=\|E_1B^{1/2}h\|^2.
\end{eqnarray*}
Therefore $W^*E_1^*E_1W\le E_1^*E_1$, or equivalently $T_1^*\Delta_{T_1}T_1\le \Delta_{T_1}$. Thus $T_1$ is concave. It is clear that $\|\Delta_{T_1}\|^{1/2} \le \delta$. It follows from the proof of Theorem \ref{te21} that one can consider $Z$ an isometry from $\h$ into $\m$ such that $ZT=T_1Z$. This implies $\Delta_T =Z^*\Delta _{T_1}Z$, so $\delta =\|\Delta _T \|^{1/2} \le \|\Delta _{T_1}\|^{1/2}$. Hence  $\|\Delta_{T_1}\|^{1/2}=\delta$.
The direct implication of the assertion (i) is proved, while the converse part is easy (that is, if $T$ has an extension $T_1$ of the form \eqref{eq210}, then $T=T_1|_{\h}$ is concave because $\h$ is invariant for $T_1$). 
%So the assertion (i) is proved.

To show the assertion (ii) we remark that if
$$
\|\Delta_{T_1}^{1/2}T_1^n(k_0\oplus k_1)\|=\delta \|E_1W^nk_1\|\to 0, \quad n\to \infty ,
$$
for all $k_0 \in \ka_0$, $k_1 \in \overline{\R(B)}$, then, for $k_1=B^{1/2}h$ with $h\in \h$, we have
$$
\|
\Delta_T^{1/2}T^nh\|=\|E_1W^nB^{1/2}h\| \to 0, \quad n\to \infty.
$$
Conversely, if this last convergence holds, then, by the Banach-Steinhaus theorem ($E_1$ and $W$ being contractions), one has $E_1W^nk_1\to 0$ for any $k_1\in \overline{\R(B)}$. In other words, $\Delta_{T_1}^{1/2}T_1^n\to 0$ strongly. So, if one of the sequences $\{\Delta_T^{1/2}T^n\}$ or $\{\Delta_{T_1}^{1/2}T_1^n\}$ converges strongly to zero, then the other also strongly converges to zero. 
If this happens, then, by a similar argument to that used in the proof of  \cite[Theorem 3.10]{JKKL}, one can show that $\frac{1}{\sqrt{n}}T^n\to 0$ and $\frac{1}{\sqrt{n}}T_1^n\to 0$ strongly. So (ii) holds.

The assertion (iii) follows immediately. Thus, considering $T$ a $B$-isometry (by Theorem \ref{te25}) with $B\ge \Delta_T$ and $\|B\|=\|\Delta_T\|$, one can construct the 2-isometry $S_{B,T}$ as in \eqref{eq28}. This operator is a minimal lifting of $T$ of type I and of covariance $\|\Delta_T\|^{1/2}$.
\end{proof}

\begin{remark}\label{re211}
\rm
If $T$ is a concave operator, then, by Theorem \ref{te25}, $T$ has an extension $\widetilde{T}$ of the form \eqref{eq29} which, in fact, can be obtained from $T_1$ in \eqref{eq210} by extending $W$ to a unitary operator. Notice that $\widetilde{T}$ is not concave, in general.
So Theorem \ref{pr210} shows that the appropriate extensions describing concave operators are those of the form \eqref{eq210}. This provides a model for the concave operators $T$ with $\|\Delta_T \|\le \delta ^2$, for some fixed $\delta>0$. A related fact is given by the last assertion of (i) in Theorem \ref{pr210}, which says that a concave $T$ is of class $C_{0\cdot}$ (as a $\Delta_T$-contraction) if and only if the corresponding concave model $T_1$ is of the same class $C_{0\cdot}$ (as a $\Delta_{T_1}$-contraction). We refer to \cite{SFb} for details about the class $C_{0\cdot}$.

Let us remark that $T_1$ in \eqref{eq210} is different from the Brownian extension $T_b$ of a concave $T$ obtained in  \cite[Theorem B]{McC}, when $\|T\|\le \sqrt{2}$ (i.e. $\Delta_T\le I$). Indeed, in this case $\Delta_{T_b}$ is an orthogonal projection, contrary to $\Delta_{T_1}$ (in general).
\end{remark}

Note also that the extension $T_1$ from \eqref{eq210} of a concave operator $T$ is an improved version of the extensions obtained in \cite[Proposition 2.2, Theorem 2.1]{BS2}, where different 2-isometric liftings have been directly constructed.

Theorem \ref{te21} applies in particular to operators similar to isometries (these are $A$-isometries with $A$ invertible). If $T$ is such an operator satisfying $T^*AT=A$ with $A\ge \beta I$ for a scalar $\beta >0$, then $A\ge \beta T^*T\ge \beta \Delta_T$ and $T$ is also an $A_{\beta}$-isometry, where $A_{\beta}=\beta^{-1}A$. In addition, since $A_{\beta}-\Delta_T\ge T^*T-\Delta_T=I$, one has $\overline{\R(A_{\beta}-\Delta_T)}=\h$. Hence the corresponding lifting $S_{A_{\beta},T}$ acts on $\ell_+^2(\h)$.

Another interesting case is that of \emph{quasi-isometries}, i.e. the $T^*T$-isometries, where $A=T^*T$ is not necessary invertible in this case. 
%For both cases (operators similar to isometries and quasi-isometries) we deduce from Theorem \ref{te21} the following result.

\begin{corollary}\label{co214}
If $T\in \B$ is similar to an isometry, or $T$ is a quasi-isometry, then $T$ has a minimal 2-isometric lifting of type I on the space $\ell_+^2(\h)$. Such a lifting is the operator $S_{A_{\beta},T}$ (respectively $S_{T^*T,T}$), having covariance $\|A_{\beta}\|^{1/2}$ (respectively $\|T\|$).
\end{corollary}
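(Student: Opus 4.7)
The plan is to reduce both cases to direct applications of Theorem \ref{te21} with a carefully chosen positive operator $A$, then identify the ambient space and covariance from the explicit construction in \eqref{eq28}.

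First I would handle the case where $T$ is similar to an isometry. By definition this is equivalent to $T$ being an $A$-isometry for some invertible $A \ge 0$, so there exists $\beta > 0$ with $A \ge \beta I$. Applying this to the relation $T^*AT = A$ yields $A \ge \beta T^*T$, hence $A_{\beta} := \beta^{-1} A$ satisfies $T^*A_\beta T = A_\beta$ and $A_\beta \ge T^*T \ge T^*T - I = \Delta_T$. Thus the hypotheses of Theorem \ref{te21}\,(ii) are met, and that theorem provides the minimal type I lifting $S_{A_\beta,T}$ defined by \eqref{eq28}, with covariance ${\rm cov}(S_{A_\beta,T}) = \|A_\beta\|^{1/2}$. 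The key extra observation, already noted in the paragraph preceding the corollary, is that $A_\beta - \Delta_T \ge T^*T - \Delta_T = I$, so $\overline{\R(A_\beta-\Delta_T)} = \h$. Consequently the underlying space of $S_{A_\beta,T}$ in \eqref{eq28} becomes $\ell_+^2(\h) \oplus \h$, which under the natural identification is $\ell_+^2(\h)$.

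Next I would treat the quasi-isometric case, where $A := T^*T$ satisfies $T^*AT = A$. The inequality $A \ge \Delta_T$ is immediate since $\Delta_T = T^*T - I = A - I \le A$, so Theorem \ref{te21}\,(ii) again applies and yields the minimal type I lifting $S_{T^*T,T}$. Here $A_T = T^*T - \Delta_T = I$, so $\overline{\R(A_T)} = \h$ and the lifting space is once more $\ell_+^2(\h) \oplus \h \cong \ell_+^2(\h)$. The covariance is ${\rm cov}(S_{T^*T,T}) = \|T^*T\|^{1/2} = \|T\|$.

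There is no real obstacle here, as the corollary is essentially a bookkeeping exercise: both hypotheses single out an explicit $A$ for which the construction of Theorem \ref{te21} applies, and the two mild observations $A_\beta - \Delta_T \ge I$ and $T^*T - \Delta_T = I$ ensure the defect space that indexes $\ell_+^2$ is all of $\h$. The only point worth stating explicitly in the write-up is the identification $\ell_+^2(\h) \oplus \h \cong \ell_+^2(\h)$ used to match the statement of the corollary with the exact form of \eqref{eq28}.
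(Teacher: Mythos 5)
Your proposal is correct and follows essentially the same route as the paper: the corollary is obtained exactly as in the paragraph preceding it, by feeding $A_\beta=\beta^{-1}A$ (respectively $A=T^*T$) into Theorem \ref{te21}\,(ii), noting $A_\beta-\Delta_T\ge I$ (respectively $T^*T-\Delta_T=I$) so that $\overline{\R(A-\Delta_T)}=\h$ and the construction \eqref{eq28} lives on $\ell_+^2(\h)\oplus\h\cong\ell_+^2(\h)$, with covariance $\|A_\beta\|^{1/2}$, respectively $\|T^*T\|^{1/2}=\|T\|$.
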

Notice that a quasi-isometry $T$ is similar to an isometry if and only if $T$ is injective with $\R(T)$ closed (see \cite{MS}). Also, a quasi-isometry $T$ is expansive if and only if $V=T|_{\overline{\R(T)}}$ is an isometry, $X=P_{\overline{\R(T)}}T|_{\n(T^*)}$ is expansive and $V^*X=0$. Hence Theorem \ref{te25} cannot be applied to quasi-isometries, in general.

A result obtained in \cite[Theorem 1.1]{BS1} shows that an expansive operator is similar to an isometry if and only if the Ces\`aro means $M_n:=\frac{1}{n+1}\sum_{j=0}^n T^{*j}T^j$ are bounded. In this case the sequence $\{M_n\}$ strongly converges to an operator $M$ such that $T^*MT=M$ and $M\ge T^*T$. Thus, the expansive operators $T$ which are similar to isometries have 2-isometric liftings of type I. These liftings can be chosen of covariance equal to $\|M\|^{1/2}$ by Corollary \ref{co214}. 

%\section{Two-isometric liftings of type II}\label{sect:type1}
\section{TWO-ISOMETRIC LIFTINGS OF TYPE II}\label{sect:type1}

\subsection{A-contractions}

As in Definition \ref{def:type}, a 2-isometric lifting $S$ for $T\in \B$ is of type II if $\h$ is invariant for $S^*S$. The following result should be compared to Theorem \ref{te21} for the type I case.

\begin{theorem}\label{te31}
For $T\in \B$ the following statements are equivalent:

\begin{itemize}
\item[(i)] $T$ has a 2-isometric lifting of type II on a Hilbert space containing $\h$;

\item[(ii)] $T$ is an $A$-contraction for a positive operator $A\neq 0$ on $\h$ with $\Delta_T \le A$;

\item[(iii)] $T$ has a lifting $T_*$ on $\h_*\supset \h$ which is a $B$-isometry for some $B\in \mathcal{B}(\h_*)$ with $0\neq B\ge 0$, such that $\Delta _{T_*}\le B$ and $B\h\subset \h$;

\item[(iv)] $T$ has a 2-isometric lifting $S$ of the form \eqref{eq21} with operators $W$ and $X$ satisfying the condition
\begin{equation}\label{eq31}
X^*\Delta_WX + 2 {\rm Re }(X^*W^*XT)\ge 0.
\end{equation}
\end{itemize}

Moreover, if these statements are true, then the lifting $S$ in (i) and (iv) can be chosen either minimal, or with $\Delta_{S|_{\h ^\perp}}=2P$, where $P$ is an orthogonal projection, with shifts on the main diagonal of $S|_{\h^{\perp}}$ in its canonical representation and with ${\rm cov}(S)=\sqrt{2}\cdot {\rm max}\{1, \|A\|^{1/2}\}$ for $A$ from (ii).
\end{theorem}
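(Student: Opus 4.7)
The plan is to close the loop (i)$\Rightarrow$(iv)$\Rightarrow$(ii)$\Rightarrow$(iii)$\Rightarrow$(i). Two of these implications are block-matrix verifications, one follows from identity \eqref{eq27}, and the remaining step---constructing a concrete lifting from the $A$-contraction hypothesis---is the main piece of work. For (i)$\Rightarrow$(iv): if $S$ is a type II lifting of $T$, then $W^*X=0$ by the invariance of $\h$ under $S^*S$, so the left-hand side of \eqref{eq31} collapses to $X^*\Delta_W X$, which is nonnegative since $W$ is itself a 2-isometry and hence has $\Delta_W\geq 0$. For (iv)$\Rightarrow$(ii): substituting \eqref{eq31} into \eqref{eq27} immediately yields $T^*AT\leq A$ with $A:=X^*X+\Delta_T\geq \Delta_T$; in the degenerate case $A=0$ one has $\Delta_T\leq 0$, so $T$ is a contraction, and it is trivially an $I$-contraction, confirming (ii) with $A=I\neq 0$.

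The heart of the argument is (ii)$\Rightarrow$(iii). Given an $A$-contraction $T$ with $\Delta_T\leq A\neq 0$, I would set $D=(A-T^*AT)^{1/2}$ and put
\[
T_*=\begin{pmatrix} S_+ & \widetilde{D} \\ 0 & T \end{pmatrix}
\]
on $\h_*=\ell_+^2(\overline{\R(D)})\oplus \h$, where $S_+$ is the forward shift and $\widetilde{D}h=(Dh,0,0,\ldots)$ is the canonical first-coordinate embedding. With the block-diagonal operator $B=2I\oplus 2A$ on this decomposition, a direct computation using $S_+^*\widetilde{D}=0$ gives $T_*^*BT_*=B$, so $T_*$ is a $B$-isometry. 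Moreover $\Delta_{T_*}=0\oplus (A-T^*AT+\Delta_T)$, and the required bound $\Delta_{T_*}\leq B$ reduces on the second block to $\Delta_T-T^*AT\leq A$, which holds because $\Delta_T\leq A$ by hypothesis and $T^*AT\geq 0$. Clearly $B\h\subset \h$ (since $B|_\h=2A$) and $B\neq 0$. The delicate point, and the part I expect to be the main obstacle, is the factor $2$: it is the smallest scalar in the scaling $B=cI\oplus cA$ that simultaneously makes $T_*$ a $B$-isometry and $B\geq\Delta_{T_*}$ for a general $A$-contraction, so a naive choice ($c=1$) would fail.

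For (iii)$\Rightarrow$(i), I would apply Theorem~\ref{te21} to $T_*$ with the positive operator $B$ (which satisfies $B\geq \Delta_{T_*}$, as one checks via $B-\Delta_{T_*}=2I\oplus(A+T^*AT-\Delta_T)\geq 0$): this produces the type I 2-isometric lifting $S=S_{B,T_*}$ as in \eqref{eq28}, living on $\ka=\ell_+^2(\overline{\R(B-\Delta_{T_*})})\oplus \h_*$, with $\Delta_S=0\oplus B$. Then $S$ is also a 2-isometric lifting of $T$, and $S^*S\,\h=(I+B)\h\subset \h$ because $B\h\subset \h$, giving exactly the type II property. For the moreover clauses, the $S$ just built has $\h^\perp=\ell_+^2(\overline{\R(B-\Delta_{T_*})})\oplus \ell_+^2(\overline{\R(D)})$, with $S|_{\h^\perp}$ upper triangular carrying the two forward shifts on its diagonal, and $\Delta_{S|_{\h^\perp}}=P_{\h^\perp}\Delta_S|_{\h^\perp}=0\oplus 2I$, which is twice an orthogonal projection; the covariance is $\|B\|^{1/2}=\sqrt{2}\,\max\{1,\|A\|^{1/2}\}$. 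To obtain instead a minimal type II lifting, I would pass to $S_0=S|_{\ka_0}$ with $\ka_0=\bigvee_{n\geq 0}S^n\h$; the restriction is again 2-isometric, minimal by construction, and remains of type II because $S^*Sh\in \h\subset \ka_0$ for every $h\in \h$, so the compression to $\ka_0$ acts on $\h$ as $S^*S$ itself.
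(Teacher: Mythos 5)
Your proposal is correct and takes essentially the same route as the paper: the same cycle (i)$\Rightarrow$(iv)$\Rightarrow$(ii)$\Rightarrow$(iii)$\Rightarrow$(i), the same lifting $T_*$ (your off-diagonal entry $(A-T^*AT)^{1/2}$ coincides, up to the obvious unitary identification, with the paper's $D_{\widehat{T}}A^{1/2}$), the same choice $B=2(I\oplus A)$, the same lifting $S_{B,T_*}$ supplied by Theorem~\ref{te21} for the moreover clauses, and the same passage to $S|_{\ka_0}$, $\ka_0=\bigvee_{n\ge 0}S^n\h$, for minimality. The only cosmetic differences are that you check the type II property directly from $S^*Sh=(I+B)h\in\h$ instead of reading off $W^*X=0$ from the block form of $\Delta_S$, and you should note explicitly in (iv)$\Rightarrow$(ii) that $A=X^*X+\Delta_T\ge 0$ because it is the compression to $\h$ of $\Delta_S\ge 0$.
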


\begin{proof}
If $S$ on $\ka \supset \h$ is a 2-isometric lifting of type II of $T$ with the block matrix \eqref{eq21}, then $W^*X=0$ and so the condition \eqref{eq31} is satisfied. Hence (i) implies (iv). Next, if a 2-isometric lifting $S$ of the form \eqref{eq21} satisfies \eqref{eq31}, then, from the relation \eqref{eq27}, we infer that $T^*AT\le A$, where $A=X^*X+\Delta_T\ge \Delta_T$. Clearly, one has $A\ge 0$ since $\Delta_S\ge 0$ in \eqref{eq22}. Notice that if $A=0$, then $I-T^*T=X^*X\ge 0$. Therefore $T$ is a contraction, i.e. an $I$-contraction with $I\ge \Delta_T$. Hence one can always consider $A\neq 0$, and thus we proved that (iv) implies (ii).

Now, let $T$ be non-isometric and let $A$ be as in (ii), i.e. satisfying $T^*AT\le A$ with $A\ge 0$ and $A\ge \Delta_T\neq 0$. Then there exists a contraction $\widehat{T}$ on $\overline{\R(A)}$ such that $\widehat{T}A^{1/2}=A^{1/2}T$. Let $\widehat{S}$ be the forward shift on $\ell_+^2(\D_{\widehat{T}})$ and set $D=JD_{\widehat{T}}$, where $J$ is the canonical injection of $\D_{\widehat{T}}$ into $\ell_+^2(\D_{\widehat{T}})$. Define the lifting $T_*$ of $T$ and the extension $\widehat{A}$ of $A$ on the space $\h_*:= \ell_+^2(\D_{\widehat{T}}) \oplus \h$ by the block matrices
\begin{equation}\label{eq32}
T_*=
\begin{pmatrix}
\widehat{S} & DA^{1/2}\\
0 & T
\end{pmatrix},
\quad \widehat{A}=
\begin{pmatrix}
I & 0\\
0 & A
\end{pmatrix}.
\end{equation}
We have $\widehat{S}^*DA^{1/2}=0$. We use this relation to obtain that
\begin{eqnarray*}
T_*^*\widehat{A}T_*&=& I \oplus (A^{1/2}D_{\widehat{T}}^2A^{1/2}+T^*AT)= I \oplus (A-A^{1/2}\widehat{T}^*\widehat{T}A^{1/2}+T^*AT)\\
&=& I \oplus A=\widehat{A}.
\end{eqnarray*}
Therefore, $T_*$ is an $\widehat{A}$-isometry. Moreover, taking into account that $\Delta_T \le A$, one has
$$
\Delta_{T_*}=0\oplus (A-T^*AT+\Delta_T)\le 0 \oplus (A+\Delta_T)\le 2 \widehat{A}.
$$ 
Setting $B=2\widehat{A}$, we obtain that $B$ and $T_*$ satisfy the conditions from (iii). Therefore (ii) implies (iii).

We also notice that the lifting $T_*$ of $T$ is minimal, that is $\h_*=\bigvee_{n\ge 0} T_*^n\h$. This fact follows easily (as for $S_{A,T}$ in the proof of Theorem \ref{te21}), keeping in mind that $\D_{\widehat{T}}=\overline{D_{\widehat{T}}A^{1/2}\h}$, $\widehat{T}$ being defined on $\overline{\R(A)}$.

In order to prove that (iii) implies (i) we assume that $T_*$ on $\h_*\supset \h$ and $B$ are as in (iii). Then, using Theorem \ref{te21}, we find a 2-isometric lifting $S=S_{B,T_*}$ as in \eqref{eq28} for $T_*$ on a Hilbert space $\ka \supset \h_*$ such that $\Delta_S=0\oplus B$ on $\ka =\h_*^{\perp}\oplus \h_*$. Clearly, $S$ will be a lifting for $T$ having on $\ka=\h'\oplus \h$ a representation of the form \eqref{eq21} with $W=S|_{\h'}$ and $X\in \mathcal{B}(\h,\h')$. So one obtains $\Delta_S$ on $\ka=\h'\oplus \h$ with representation \eqref{eq22}. Now, since $B\h\subset \h$ and $B\ge 0$, it follows that $\h$ reduces $B$, so $B=B_0\oplus B_1$ on $\h_*=\h^{\perp} \oplus \h$. Then, as $S|_{\h_*^{\perp}}$ is an isometry, we get the representations
$$
\Delta_S=0\oplus B=(0\oplus B_0)\oplus B_1
$$
on the decompositions $\ka=\h_*^{\perp}\oplus \h_*= \h'\oplus \h$, respectively. From the block matrix \eqref{eq22} of $\Delta_S$ we infer that $W^*X=0$. This means that $S$ is a 2-isometric lifting of type II for $T$. Thus we have shown that (iii) implies (i) and all equivalences (i)-(iv) are provided.

Let us remark that the above lifting $S=S_{B,T_*}$ can be chosen in such a manner that it has the form \eqref{eq21} with $\Delta_W=2P$, where $P$ is an orthogonal projection. Indeed, if we take $B=2\widehat{A}$ with $\widehat{A}=I\oplus A$ and $A$ from (ii), as in the implication (ii) $\Rightarrow$ (iii), then $S$ can be expressed in terms of the operator $A$. Keeping in mind the form of $\Delta_{T_*}$, we get
$$
B_{T_*}:= B-\Delta_{T_*}=2 \widehat{A}-\Delta_{T_*}= 2I\oplus (A+T^*AT-\Delta_T)= 2I\oplus B_0
$$
on $\h_*=\ell_+^2(\D_{\widehat{T}})\oplus \h$, where $B_0=A+T^*AT-\Delta_T\ge 0$. So, by \eqref{eq28}, the operator $S$ acts on $\ka =\h_*^{\perp} \oplus \h_*$, where $\h_*^{\perp}=\ell_+^2(\overline{\R(B_{T_*})})$ and $\overline{\R(B_{T_*})}=\ell_+^2(\mathcal{D}_{\widehat{T}})\oplus \overline{\R(B_0)}$. Therefore, $S$ has on $\ka =\h_*^{\perp}\oplus \ell_+^2(\D_{\widehat{T}}) \oplus \h=(\ka \ominus \h)\oplus \h$ the representations
\begin{equation}\label{eq33}
S=
\begin{pmatrix}
S_* & \sqrt{2}\widetilde{I} & \widetilde{B}_0\\
0 & \widehat{S} & DA^{1/2}\\
0 & 0 & T
\end{pmatrix}
=
\begin{pmatrix}
W & X\\
0 & T
\end{pmatrix},
\end{equation}
where $S_*$ is a shift operator on $\h_*^{\perp}$, while $\widetilde{I}$ and $\widetilde{J}$, with $\widetilde{B}_0=\widetilde{J}B_0^{1/2}$, are the canonical injections of $\h_*\ominus \h$, respectively of $\overline{\R(B_0)}$, into $\n(S_*^*)$. We infer from these matrices that $W=S|_{\h'}$ has a special form on $\h'=\ka \ominus \h$ with two shifts on the main diagonal and $\widetilde{I}$ an isometry. Therefore $\Delta_W=0\oplus 2I=2P_{\R(\Delta_W)}$. Also, since $\Delta_S=0\oplus B=0\oplus 2(I\oplus A)$, we get ${\rm cov}(S)=\sqrt{2} \cdot {\rm max}\{1,\|A\|^{1/2}\}$. Thus we have proved a part of the last assertion of the theorem.

It remains to show that the lifting $S$ in (i) can be also chosen minimal, i.e. with $\ka=\bigvee_{n\ge 0} S^n\h$.
Indeed, let us denote by $\ka_0=\bigvee_{n\ge 0} S^n\h$ the smallest invariant subspace in $\ka$ for $S$ which contains $\h$, and let $S_0=S|_{\ka_0}$. Then $S_0$ is a 2-isometry. Since $S^*$ is an extension of $T^*$ and a lifting of $S_0^*$, we have
$$
S_0^*|_{\h}=P_{\ka _0}S^*|_{\h}=T^*.
$$
 Hence $S_0$ is a lifting of $T$. So, $S_0$ has the form
 $$
 S_0=
 \begin{pmatrix}
 W_0 & X_0\\
 0 & T
 \end{pmatrix}
 $$
 on $\ka_0=\h_0\oplus \h$, where $W_0=S_0|_{\h_0}=S|_{\h_0}=W|_{\h_0}$ and $X_0=P_{\h_0}X|_{\h}$. Here $W$ and $X$ are coming from the block matrix \eqref{eq21} of $S$ on $\ka=\h'\oplus \h$ and they have representations of the form
 $$
 W=
 \begin{pmatrix}
 W_0 & \star\\
 0 & \star
 \end{pmatrix},
 \quad X=
 \begin{pmatrix}
 X_0\\
 \star
 \end{pmatrix}
 $$
 on $\h'=\h_0\oplus \h_0^{\perp}$, respectively from $\h$ into $\h_0 \oplus \h_0^{\perp}$ for the matrix of $X$. But $S$ is a lifting of type II for $T$, so $S^*S\h \subset \h$. Then $S_0^*S_0\h=P_{\ka_0} S^*S\h \subset \h$, so $S_0$ is also a lifting of type II for $T$. In addition, $S_0$ is a minimal lifting. Since any lifting of $T$ satisfying (i) also satisfies (iv), it follows that in (iv) one can also chose a minimal lifting for $T$. This completes the proof of theorem.
\end{proof}

\begin{remark}\label{re32}
\rm
The condition $B\h \subset \h$ in the assertion (iii) of Theorem \ref{te31} is essential (as we have seen in the proof of (iii) $\Rightarrow$ (i)). If $T$ has a lifting which has a 2-isometric lifting of type I, then it is not necessarily true that $T$ has a 2-isometric lifting of type II.
\end{remark}

Obviously, Theorem \ref{te31} generalizes Theorem \ref{te21}. The assertion (ii) of the latter can be reformulated in terms of $A$-contractions and $Q$-expansive operators $T$, i.e. operators satisfying $0\le Q \le T^*QT$. More precisely, from Theorem \ref{te21} and \cite[Theorem 2.1]{BFF}, we deduce the following result which shows when an operator with 2-isometric liftings of type II has a lifting of type I. This always happens for expansive operators (by Theorems \ref{te25} and \ref{te31}). In addition, the corollary below reproves the assertion (ii) of Theorem \ref{te25} in the case when $Q$ is a scalar multiple of the identity (for $T$ expansive).

\begin{corollary}\label{co33}
An operator $T\neq 0$ has a 2-isometric lifting of type I if and only if there exist two positive operators $A$ and $Q$ satisfying $\Delta_T\le A$ and $T^*AT\le A\le Q \le T^*QT$.
\end{corollary}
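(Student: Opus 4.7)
The plan is to prove both implications by reduction to Theorem~\ref{te21}, with the converse relying on \cite[Theorem 2.1]{BFF}, which provides a positive $A$-isometry intercalated between a given $A$-contractive operator and a $Q$-expansive operator.

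For the direct implication, suppose $T$ has a $2$-isometric lifting of type I. By Theorem~\ref{te21}, (ii), one of two alternatives holds. If $T$ is an $A$-isometry with $0\neq A\ge 0$ and $A\ge \Delta_T$, set $Q:=A$; then the chain $T^*AT\le A\le Q\le T^*QT$ collapses to equalities, so all hypotheses are trivially met. If instead $T$ is a strongly stable contraction, then $\Delta_T\le 0$ and the choice $A=Q=0$ works without further work.

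For the converse, let $A,Q\ge 0$ satisfy $\Delta_T\le A$ and $T^*AT\le A\le Q\le T^*QT$. By \cite[Theorem 2.1]{BFF}, there exists $A'\ge 0$ with $A\le A'\le Q$ and $T^*A'T=A'$; in particular $A'\ge A\ge \Delta_T$. If $A'\neq 0$, then $T$ is an $A'$-isometry to which Theorem~\ref{te21} applies directly, producing a $2$-isometric lifting of type I. In the remaining degenerate case $A'=0$, one has $A=0$, hence $\Delta_T\le 0$ and $T$ is a contraction. Either $T$ is strongly stable, in which case Theorem~\ref{te21} again applies, or the (decreasing) sequence $T^{*n}T^n$ has a nonzero strong limit $\widehat{A}:=s-\lim_{n\to \infty} T^{*n}T^n$, which is a positive operator with $T^*\widehat{A}T=\widehat{A}$ and $\widehat{A}\ge 0\ge \Delta_T$, so Theorem~\ref{te21} applies once more.

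The main obstacle is the converse, and specifically the extraction of the intermediate $A$-isometry $A'$ from an $A$-contractive/$Q$-expansive pair; once \cite[Theorem 2.1]{BFF} is accepted as a black box, the remainder of the argument is bookkeeping around the two alternatives in Theorem~\ref{te21}, (ii), plus the classical dichotomy ``strongly stable vs.\ $\widehat{A}$-isometry'' for contractions, which takes care of the borderline case $A'=0$.
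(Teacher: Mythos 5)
Your proposal is correct and follows essentially the same route the paper intends: the forward direction is read off from Theorem~\ref{te21}\,(ii) (taking $Q=A$, or $A=Q=0$ in the strongly stable case), and the converse uses \cite[Theorem 2.1]{BFF} to interpolate an $A$-isometry weight $A'$ with $A\le A'\le Q$ and then invokes Theorem~\ref{te21} again. Your extra bookkeeping for the degenerate case $A'=0$ (where $T$ is a contraction, handled via strong stability or $\widehat{A}=s\hbox{-}\lim T^{*n}T^n$) matches the treatment of contractions already present in the proof of Theorem~\ref{te21}, so nothing is missing.
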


Notice that if $T$ is an operator similar to a contraction, then $T$ is an $A_0$-contraction for an invertible operator $A_0$, which can be chosen such that $A_0\ge T^*T$. As a direct consequence of Theorem \ref{te31}, we obtain the following result.
\begin{corollary}\label{co35}
If $T\in \B$ is non-contractive and similar to a contraction, then $T$ has a 2-isometric lifting of type II and of covariance $\sqrt{2} \cdot \|A_0\|^{1/2}$, with $\|A_0\|\ge \|T\|$.
\end{corollary}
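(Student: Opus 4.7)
The plan is to deduce Corollary~\ref{co35} directly from Theorem~\ref{te31} via the implication (ii)$\Rightarrow$(i), together with the sharp covariance estimate stated in the ``moreover'' clause of that theorem. The main work is to justify the remark (made just before the statement) that when $T$ is similar to a contraction one can find an invertible positive $A_0$ with $T^*A_0T\le A_0$ and $A_0\ge T^*T$.

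First I would recover such an $A_0$. Since $T$ is similar to a contraction, there is an invertible $Z\in\mathcal{B}(\h,\h')$ and a contraction $C\in\mathcal{B}(\h')$ with $ZT=CZ$. Setting $A_1:=Z^*Z$, which is positive and invertible in $\B$, the identity $T^*A_1T=Z^*C^*CZ\le Z^*Z=A_1$ shows that $T$ is an $A_1$-contraction. Because $A_1$ is invertible one has $A_1\ge \alpha I$ for some $\alpha>0$, so for $\beta:=\|T\|^2/\alpha$ the operator $A_0:=\beta A_1$ still satisfies $T^*A_0T\le A_0$ and in addition $A_0\ge \beta\alpha I=\|T\|^2 I\ge T^*T$. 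In particular $A_0\ge T^*T-I=\Delta_T$, so the hypothesis of Theorem~\ref{te31}(ii) is met with $A=A_0\neq 0$.

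Applying Theorem~\ref{te31}, $T$ admits a $2$-isometric lifting $S$ of type II. The ``moreover'' clause of that theorem tells us that $S$ can be chosen so that
\[
{\rm cov}(S)=\sqrt{2}\cdot \max\{1,\|A_0\|^{1/2}\}.
\]
The non-contractivity hypothesis $\|T\|>1$ combined with $A_0\ge T^*T$ gives $\|A_0\|\ge \|T^*T\|=\|T\|^2>1$, whence $\|A_0\|^{1/2}>1$ and the maximum reduces to $\|A_0\|^{1/2}$. Thus ${\rm cov}(S)=\sqrt{2}\cdot\|A_0\|^{1/2}$. The inequality $\|A_0\|\ge\|T\|$ is now immediate from $\|A_0\|\ge\|T\|^2\ge\|T\|$ (as $\|T\|\ge 1$).

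There is no real obstacle here beyond the scaling argument for $A_0$; the substance is entirely encapsulated in Theorem~\ref{te31}. The only mild subtlety worth flagging is that the invertibility of the similarity $Z$ is exactly what produces the strict lower bound $A_1\ge \alpha I>0$, which in turn permits the rescaling step that forces $A_0\ge T^*T$ (and, through non-contractivity, the clean form of the covariance).
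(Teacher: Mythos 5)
Your proposal is correct and follows the paper's intended route: the paper deduces the corollary directly from Theorem~\ref{te31}(ii)$\Rightarrow$(i) together with its ``moreover'' covariance clause, after noting (as you verify via $A_1=Z^*Z$ and rescaling) that similarity to a contraction yields an invertible $A_0$ with $T^*A_0T\le A_0$ and $A_0\ge T^*T$, so that non-contractivity gives $\|A_0\|\ge\|T\|^2>1$ and ${\rm cov}(S)=\sqrt{2}\,\|A_0\|^{1/2}$. Your write-up simply fills in the scaling details the paper leaves implicit.
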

Among the operators similar to contractions we can consider those having ${\rho}$-dilations. For $\rho >0$, an operator $T\in \B$ is said to have a ${\rho}$-\emph{dilation} if there exists a unitary operator $U_{\rho}$ on some space $\h_{\rho}\supset \h$ such that $T^n=\rho P_{\h}U^n_{\rho}|_{\h}$ for $n\ge 1$ (see \cite{SFb}). Such a $\rho$-dilation $U_{\rho}$ of $T$ is not a lifting for $T$. From Corollary \ref{co35} we deduce the following
\begin{corollary}\label{co36}
Any operator $T$ having a ${\rho}$-dilation has a minimal 2-isometric lifting of type II.
\end{corollary}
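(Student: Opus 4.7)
The plan is to reduce the corollary to Corollary \ref{co35} via the classical fact that operators possessing a $\rho$-dilation are similar to contractions. The first step is to invoke the Sz.-Nagy--Foias theory of the class $C_\rho$ (see \cite{SFb}): since $T^n = \rho P_{\h} U_\rho^n|_{\h}$ for $n \ge 1$ with $U_\rho$ unitary, $T$ belongs to $C_\rho$, and every operator in $C_\rho$ is similar to a contraction. Consequently, there exists a positive invertible $A_0 \in \B$ with $T^* A_0 T \le A_0$ and $A_0 \ge T^*T$, so in particular $A_0 \ge \Delta_T$.

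Next, I would split the argument into two cases according to whether $T$ is itself a contraction. In the contractive case, the minimal isometric lifting $V$ of $T$ given by Sz.-Nagy's dilation theorem is already a minimal 2-isometric lifting of type II: every isometry is a 2-isometry, $\Delta_V = 0$, and therefore $\h$ is trivially invariant for $V^*V$. In the non-contractive case, $T$ is non-contractive and similar to a contraction, so Corollary \ref{co35} directly produces a 2-isometric lifting of type II (of covariance $\sqrt{2}\,\|A_0\|^{1/2}$).

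To obtain a \emph{minimal} such lifting in the non-contractive case, I would appeal to the construction at the end of the proof of Theorem \ref{te31}: restricting any type II lifting $S$ on $\ka$ to the invariant subspace $\ka_0 = \bigvee_{n \ge 0} S^n \h$ yields a minimal lifting $S_0$, and the computation $S_0^* S_0 \h = P_{\ka_0} S^* S \h \subset \h$ shows that the type II property descends. I do not foresee a serious obstacle: the substantive content is packaged inside Corollary \ref{co35} and the classical similarity result for $C_\rho$ operators, so the proof amounts to assembling these two ingredients and extracting minimality by the standard invariant-subspace restriction.
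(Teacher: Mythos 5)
Your argument is correct and is essentially the paper's own route: the corollary is stated as an immediate consequence of Corollary \ref{co35}, using the classical fact that $C_\rho$ operators are similar to contractions, with minimality obtained exactly as in the last part of the proof of Theorem \ref{te31}. Your explicit treatment of the contractive case (where the minimal isometric lifting already serves as a minimal type II lifting) is a harmless refinement of the same approach.
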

Recall (see \cite{SuSu}) that an operator with a $\rho$-dilation is a $S_T$-isometry for some positive operator $S_T$. So if $S_T \ge \Delta_T$ it follows that $T$ has even a 2-isometric lifting of type I.

Another special class of operators similar to contractions is given by quasi-contractions, that is the $T^*T$-contractions (see \cite{BS2, CS}). For such an operator $T$ one obtains from Theorem~\ref{te31} a $2$-isometric lifting $S$ of type II with 
$${\rm cov}(S)=\sqrt{2} \cdot {\rm max} \{1,\|T\|\}=\sqrt{2} \|T\| ,$$ whenever $T$ is non-contractive.

Now we show that there exist operators similar to contractions, even quasi-contractions, which do not have 2-isometric liftings of type I.
\begin{example}\label{ex37}
\rm
Let $C$ be a contraction on $\h$ such that $C^n \to 0$ strongly and let $\delta > 1$ be a scalar. Then the operator $T$ on $\h \oplus \h$ with the matrix representation
$$
T=
\begin{pmatrix}
C & \delta J\\
0 & 0
\end{pmatrix},
$$
where $J(0\oplus h)=h \oplus 0$, for $h\in \h$, is a non-contractive quasi-contraction. Thus $T$ is similar to a contraction. But $T^n\to 0$ strongly, because $C$ has this property. Hence $T$ cannot be an $A$-isometry with $A\neq 0$. Thus, by Theorem \ref{te21}, $T$ does not possess 2-isometric liftings of type I. A simpler example of this form is a nilpotent operator of order 2 (and thus having $\rho$-dilations for suitable $\rho$) on $\C \oplus \C$.
\end{example}

On the other hand, we show that an operator $T$ similar to a contraction can have 2-isometric liftings of type I (not only of type II), without being similar to an isometry. Therefore, in this case, Corollary \ref{co33} applies to a non-invertible operator $A$, so not equal to the operator $A_0$ from Corollary \ref{co35} (otherwise $T$ will be similar to an isometry by  \cite[Theorem 2.1]{BFF}).
\begin{example}\label{ex38}
\rm
Let $U$ be a unitary operator on $\h=\h_0\oplus \h_1$ ($\h_j\neq \{0\}$) with a block matrix of the form
$$
U=
\begin{pmatrix}
V & \star\\
0 & V'^*
\end{pmatrix},
$$
where $V$ and $V'$ are isometries. Let $C=V\oplus 0$ on $\h$ and set $E=-J(0\oplus V'^*)$, acting from $\{0\} \oplus \h$ to $\h\oplus \{0\}$, where $J$ is as in Example \ref{ex37}. Finally, we define the operator $T$ on $\h\oplus \h$ by
$$
T=
\begin{pmatrix}
C & E\\
0 & U
\end{pmatrix}.
$$
It is clear that $C^*E=0$. Therefore $T$ has the form \eqref{eq24}. Furthermore, if $Z=JP_{\h_1}$ (an operator from $\{0\}\oplus \h$ into $\h\oplus \{0\}$), then it is easy to see that
$$
CZ-ZU=-J(0\oplus V'^*)=E.
$$
A known result (see  \cite{B2}) and the last relation ensure that $T$ is similar to a contraction, more precisely to the diagonal operator $C\oplus U$. As $C$ is not an isometry, the operator $T$ is not similar to an isometry. However, $T$ has a minimal Brownian isometric lifting of type I by Corollary \ref{co22}.
\end{example}
\subsection{Isomorphic minimal 2-isometric liftings.}
Previous results show the existence of minimal 2-isometric liftings, but their uniqueness up to an isomorphism (i.e. a unitary equivalence which fixes $\h$) is not guaranteed, in this context. This is in contrast to the classical case of isometric (unitary) dilation theory of contractions (see \cite{FF}, \cite{SFb}). However, the following fact about minimal 2-isometric liftings of type II is true.
\begin{proposition}\label{pr39}
Let $T\in \B$ and let $S$ on $\ka\supset \h$ and $S'$ on $\ka'\supset \h$ be two minimal 2-isometric liftings of $T$ with $S^*S\h \subset \h$ and $S'^*S'\h \subset \h$. Then $S$ and $S'$ are isomorphic if and only if $S^*S|_{\h}=S'^*S'|_{\h}$. If this is the case, then the 2-isometries $S|_{\ka \ominus \h}$ and $S'|_{\ka' \ominus \h}$ are unitarily equivalent.
\end{proposition}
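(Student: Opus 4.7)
My plan is to handle the forward direction by a short computation and to prove the converse by constructing a unitary $Z:\ka\to\ka'$ as the continuous extension of the identification $S^n h \mapsto S'^n h$, with $Z|_\h = I_\h$. The crux will be a Gram-matrix identity showing that $\langle S^n h, S^m h'\rangle_\ka$ depends only on $T$ and on $A := S^*S|_\h - I_\h$, which under the type II hypothesis is an honest operator on $\h$.

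For the forward direction, if $Z$ is a unitary with $ZS = S'Z$ and $Z|_\h = I_\h$, then for $h,h'\in\h$,
\[
\langle S^*Sh, h'\rangle_\ka = \langle Sh, Sh'\rangle_\ka = \langle S'Zh, S'Zh'\rangle_{\ka'} = \langle S'^*S'h, h'\rangle_{\ka'},
\]
so $S^*S|_\h = S'^*S'|_\h$ as operators on $\h$.

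For the converse, I would write $S = \left(\begin{smallmatrix} W & X\\ 0 & T\end{smallmatrix}\right)$ on $\ka = (\ka\ominus\h)\oplus\h$; the type II hypothesis is exactly $W^*X = 0$, hence $W^{*j}X = 0$ for every $j\ge 1$. Since $S$ is a 2-isometry, $S^{*n}S^n = I + n\Delta_S$, so for $n\le m$ and $h,h'\in\h$,
\[
\langle S^n h, S^m h'\rangle = \langle h, S^{m-n}h'\rangle + n\,\langle h, \Delta_S S^{m-n}h'\rangle,
\]
and $\langle h, S^k h'\rangle = \langle h, T^k h'\rangle$ because $S^*|_\h = T^*$. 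The main technical step, which I expect to be the principal obstacle, is the identity
\[
\langle h, \Delta_S S^k h'\rangle = \langle A h, T^k h'\rangle \quad (h,h'\in\h,\ k\ge 0),
\]
with $A = X^*X + \Delta_T$. The plan to establish it is first to check by induction on the block form of $S$ that $S^{k+1}h' = \bigl(\sum_{j=0}^k W^j X T^{k-j}h'\bigr) \oplus T^{k+1}h'$, then to write $\langle h, \Delta_S S^k h'\rangle = \langle Sh, S^{k+1}h'\rangle - \langle h, T^k h'\rangle$ and observe that every cross term $\langle Xh, W^j X T^{k-j}h'\rangle$ with $j\ge 1$ is killed by $W^{*j}X = 0$; what remains is $\langle X^*Xh, T^k h'\rangle + \langle T^*Th, T^k h'\rangle - \langle h, T^k h'\rangle = \langle A h, T^k h'\rangle$. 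Consequently the Gram matrix depends only on $(T,A)$, and by the hypothesis $S^*S|_\h = S'^*S'|_\h$ the same is true for $S'$ with the same $(T,A)$, so the Gram matrices coincide.

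Given the Gram identity, the assignment $Z_0 : \sum_n S^n h_n \mapsto \sum_n S'^n h_n$ is well-defined and isometric on the linear span of $\{S^n h : n\ge 0,\ h\in\h\}$, dense in $\ka$ by minimality; continuity together with the minimality of $S'$ (for surjectivity) promotes it to a unitary $Z:\ka\to\ka'$. By construction $Z|_\h = I_\h$ and $ZS = S'Z$, giving the isomorphism. For the last assertion, $Z$ is a unitary fixing $\h$, so it restricts to a unitary $\ka\ominus\h \to \ka'\ominus\h$; the block-triangular form of $S$ makes $\ka\ominus\h$ invariant for $S$, so this restriction intertwines $S|_{\ka\ominus\h}$ with $S'|_{\ka'\ominus\h}$, establishing their unitary equivalence.
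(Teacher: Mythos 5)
Your proposal is correct and follows essentially the same route as the paper: both directions rest on the identity $S^{*n}S^n=I+n\Delta_S$ for $2$-isometries together with the type II hypothesis, which makes the Gram data $\langle S^nh,S^mh'\rangle$ depend only on $T$ and $S^*S|_{\h}$, after which the standard density argument yields the unitary $Z$ and the final intertwining of the restrictions to $\ka\ominus\h$ and $\ka'\ominus\h$. The only stylistic difference is that your ``main technical step'' can be obtained in one line, since $\Delta_S\h\subset\h$ and $P_{\h}S^k|_{\h}=T^k$ give $\langle h,\Delta_SS^kh'\rangle=\langle \Delta_Sh,T^kh'\rangle=\langle Ah,T^kh'\rangle$ without the block-matrix induction.
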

\begin{proof}
Let $T,S$ and $S'$ be as above. Since $S$ is a 2-isometry, we have $S^{*2}S^2=2S^*S-I$ and, for $n>2$, one obtains the formula $S^{*n}S^n=nS^*S-(n-1)I$. A similar relation holds for $S'$. Since $S^*S\h \subset \h$, we infer that $S^{*n}S^n\h \subset \h$ for $n\ge 1$. On the other hand, we have $P_{\h}S^n|_{\h}=T^n=P_{\h}S'^n|_{\h}$ for $n\ge 1$. Using these relations, we obtain, for any finite system $\{h_j\}_0^n\subset \h$, that
$$
\|\sum_{j=0}^nS^jh_j\|^2_{\ka} = \sum_{j,l=0}^n \langle S^jh_j, S^lh_l\rangle = \|h_0\|^2 + 
$$
$$
\sum_{l=1}^n \langle h_0, T^lh_l\rangle + \sum_{l\ge j\ge 1} \langle S^{*(l-j)}S^{*j}S^jh_j,h_l \rangle + \sum_{j=1}^n \langle T^jh_j,h_0 \rangle +  \sum_{j>l\ge 1}\langle h_j,S^{*(j-l)}S^{*l}S^lh_l \rangle
$$
$$
=\|h_0\|^2 +2 {\rm Re} \sum_{j=1}^n \langle T^j h_j, h_0\rangle + \sum_{l\ge j \ge 1} \langle S^{*j}S^jh_j, T^{l-j}h_l \rangle + \sum_{j>l\ge 1} \langle T^{j-l}h_j,S^{*l}S^lh_l \rangle
$$
$$
= \|h_0\|^2 +2 {\rm Re} \sum_{j=1}^n \langle T^j h_j, h_0\rangle - \sum_{l\ge j \ge 2} (j-1) \langle h_j,T^{l-j}h_l \rangle - \sum_{j>l \ge 2} (l-1) \langle T^{j-l}h_j, h_l \rangle
$$
$$
 + \sum_{l\ge j \ge 1} j\langle S^*Sh_j, T^{l-j}h_l \rangle + \sum_{j>l\ge 1} l \langle T^{j-l}h_j, S^*Sh_l \rangle .
$$

Assume now that $S^*S|_{\h}=S'^*S'|_{\h}$. Then the last expression in the above computation can be also written in terms of $T$ and $S'$. So, proceeding in a reverse way, one obtains
$$
\|\sum_{j=0}^n S^jh_j\|_{\ka}= \|\sum_{j=0}^n S'^jh_j\|_{\ka'}.
$$
Then, by a standard argument, it follows that there exists a unitary operator $Z$ from $\ka$ onto $\ka'$ satisfying the relations $ZS=S'Z$ and $Z|_{\h}=I$. Therefore $S$ and $S'$ are isomorphic as 2-isometric liftings of $T$.

Conversely, if there is such an operator $Z$ which preserves the elements of $\h$ and intertwines $S$ with $S'$, then $S^*S=Z^*S'^*S'Z$. We get $S^*S|_{\h}=Z^*S'^*S'|_{\h}=S'^*S'|_{\h}$, because $S'^*S'\h \subset \h$ and $Z|_{\h}=I=Z^*|_{\h}$. Thus the first assertion of Proposition~\ref{pr39} is proved. Furthermore, if $Z$ is as above, then $Z(\ka \ominus \h)=\ka'\ominus \h$ and $Z':=Z|_{\ka \ominus \h}$ is unitary. So one obtains
$$
Z'(S|_{\ka \ominus \h})=(ZS)|_{\ka \ominus \h}=(S'Z)|_{\ka \ominus \h}=(S'|_{\ka '\ominus \h})Z'.
$$
Therefore the 2-isometries $S|_{\ka \ominus \h}$ and $S'|_{\ka ' \ominus \h}$ are unitarily equivalent by $Z'$.
\end{proof}

From the last assertion of the previous proposition we derive the following

\begin{corollary}\label{co310}
Suppose that two minimal 2-isometric liftings of type II of an operator are isomorphic. Then one of them is of type I if and only if the other is of type I.
\end{corollary}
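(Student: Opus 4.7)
The plan is to derive this corollary as an immediate consequence of the final assertion of Proposition~\ref{pr39}. Let $S$ on $\ka \supset \h$ and $S'$ on $\ka' \supset \h$ be the two minimal 2-isometric liftings of type II for $T$, and assume they are isomorphic via a unitary $Z \in \mathcal{B}(\ka,\ka')$ with $ZS=S'Z$ and $Z|_{\h}=I$. Proposition~\ref{pr39} tells us that $Z':=Z|_{\ka \ominus \h}$ is a unitary from $\ka \ominus \h$ onto $\ka' \ominus \h$ satisfying $Z'(S|_{\ka \ominus \h})=(S'|_{\ka' \ominus \h})Z'$, so the two restrictions $S|_{\ka \ominus \h}$ and $S'|_{\ka' \ominus \h}$ are unitarily equivalent.

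The next step is to translate the type I condition into a statement that transfers under unitary equivalence. By Definition~\ref{def:type}, $S$ is of type I precisely when $\ka \ominus \h \subset \n(\Delta_S)$. Since $\Delta_S \ge 0$ (as $S$ is a 2-isometry), this inclusion is equivalent to the vanishing of $\langle \Delta_S k,k\rangle = \|Sk\|^2-\|k\|^2$ for every $k \in \ka \ominus \h$, i.e., to $S|_{\ka \ominus \h}$ being an isometry on $\ka \ominus \h$. The same reformulation applies to $S'$.

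Finally, the property of being an isometry is preserved under unitary equivalence of operators. Hence $S|_{\ka \ominus \h}$ is an isometry if and only if $S'|_{\ka' \ominus \h}$ is an isometry, which, by the reformulation above, is the same as saying that $S$ is of type I if and only if $S'$ is of type I. There is no real obstacle in this argument; the whole content of the corollary is bookkeeping that is already encoded in Proposition~\ref{pr39} together with Definition~\ref{def:type}.
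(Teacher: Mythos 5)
Your proof is correct and follows essentially the same route the paper intends: the corollary is presented there as a direct consequence of the last assertion of Proposition~\ref{pr39}, combined with the observation (made in the discussion preceding Definition~\ref{def:type}, and which you rederive via positivity of $\Delta_S$) that a $2$-isometric lifting $S$ is of type~I exactly when $S|_{\ka\ominus\h}$ is an isometry, a property preserved under the unitary equivalence of the restrictions. Nothing is missing.
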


\begin{remark}\label{re311}
\rm
Assume that $S$ on $\ka \supset \h$ is a 2-isometric lifting of type I for $T\in \B$. If $S$ does not satisfy the minimality condition, then the minimal lifting $S_0=S|_{\ka_0}$ on $\ka_0=\bigvee_{n\ge 0} S^n\h$ is also of type I for $T$. Indeed, since $\ka_0$ and $\ka \ominus \h$ are invariant for $S$, it follows that $S_0(\ka _0 \ominus \h)=S(\ka _0 \ominus \h) \subset \ka _0 \ominus \h$, and so $S_0|_{\ka_0 \ominus \h}=S|_{\ka_0 \ominus \h}$ is an isometry.

Moreover, $\ka_0$ is a reducing subspace for $S$. Indeed, since $S^*S \h \subset \h$, we have
$$
S^*\ka_0 \subset \h \vee (\bigvee_{n\ge 2} S^*S^n\h)\subset \ka_0 \vee (\bigvee_{n\ge 1} \Delta_S S^n\h) \subset \ka_0.
$$
The last inclusion holds because $\ka \ominus \h \subset \n(\Delta_S)$, $S$ being of type I and so by \eqref{eq21} we have $\Delta_S=0\oplus \Delta_{S}|_{\h}$ on $\ka=(\ka \ominus \h)\oplus \h$, hence $\Delta_SS^nh= \Delta_ST^nh \in \h$ for $h\in \h$, $n\ge 1$.
Thus the minimality condition for 2-isometric liftings of type I can be defined with respect to reducing subspaces, or equivalently, to subspaces assumed to be only invariant. This fact is analogous to the notion of minimal Brownian unitary (respectively isometry, in the cyclic case) extension for a 2-isometry, which appears in  \cite[Section 10]{AS3}.
\end{remark}
\begin{acknowledgements} 
%Here are the acknowledgements.
The first named author was supported in part by
 the project FRONT of the French
National Research Agency (grant ANR-17-CE40-0021) and by the Labex CEMPI (ANR-11-LABX-0007-01).
The second named author was supported by a project financed by Lucian Blaga
University of Sibiu and Hasso Plattner Foundation research Grants LBUS-IRG-2019-05. This paper has been finalized when both authors were in residence at CIRM, Luminy, France 
%(``Recherche en Bin\^ome no. 2198'').
during a ``Recherche en Bin\^ome no. 2198'' programme.
\end{acknowledgements}
% We give three examples on how to type the bibliographical items, for
% articles, books, and dissertations.
%\begin{thebibliography}{99}

\end{document}